\def\qed{\hfill {\hbox{${\vcenter{\vbox{               
   \hrule height 0.4pt\hbox{\vrule width 0.4pt height 6pt
   \kern5pt\vrule width 0.4pt}\hrule height 0.4pt}}}$}}}
\def\tr{\triangleright}
\def\bar{\overline}
\newtheorem{theorem}{Theorem}
\newtheorem{definition}{Definition}
\newtheorem{lemma}[theorem]{Lemma}
\newtheorem{corollary}[theorem]{Corollary}
\newtheorem{example}{Example}
\newtheorem{remark}{Remark}
\newenvironment{proof}[1][Proof]{\smallskip\noindent{\bf #1.}\quad}%
{\qed\par\medskip}
\date{}
\title{\Large \textbf{Augmented Biracks and their Homology}}
\author{
Jose Ceniceros
 \footnote{Email: \texttt{jcenic1@lsu.edu}  } \\Louisiana State University
\and
Mohamed Elhamdadi
 \footnote{Email: \texttt{emohamed@math.usf.edu}} \\University of South Florida
\and
Matthew Green
 \footnote{Email: \texttt{mgreen@mail.usf.edu}}\\ University of South Florida
\and 
Sam Nelson  \footnote{Email: \texttt{knots@esotericka.org}}\\Claremont McKenna College}
\begin{document}
\maketitle

\begin{abstract}
We introduce augmented biracks and define a (co)homology theory associated
to augmented biracks. The new homology theory extends the previously studied
Yang-Baxter homology with a combinatorial formulation for the boundary map and
specializes to $N$-reduced rack homology when the birack is a rack. We 
introduce augmented birack 2-cocycle invariants of classical and virtual knots
and links and provide examples.
\end{abstract}

\textsc{Keywords:} Biracks, rack homology, enhancements of counting 
invariants, cocycle invariants

\textsc{2010 MSC:} 57M27, 57M25

\section{Introduction}

\textit{Quandles}, an algebraic structure associated to oriented knots and 
links, were introduced by Joyce and Matveev independently in 1982 in 
\cite{J, Matv}. 
\textit{Racks}, the analogous structure associated to framed knots and links,
were introduced by Fenn and Rourke in 1992 in \cite{FR}. Soon thereafter,
\textit{biracks} were introduced in \cite{FRS1} and later the special cases 
known as \textit{biquandles} were studied in papers such as \cite{KR,FJK,NV}.

A homology and cohomology theory associated to racks was introduced in 
\cite{FRS}. In \cite{CJKLS}, a subcomplex of the rack chain complex was 
identified in the case when our rack is a quandle, and cocycles in the 
quotient complex (known as \textit{quandle cocycles}) were used to enhance the
quandle counting invariant, yielding \textit{CJKLS quandle cocycle invariants}.
In \cite{EN}, the degenerate subcomplex was generalized to the case of 
non-quandle racks with finite rack rank, yielding an analogous enhancement of
the rack counting invariant via \textit{$N$-reduced cocycles}. 

In \cite{CES1}, Yang-Baxter (co)homology was defined as a natural generalization
of the quandle (co)homology for biquandles, but the boundary map was difficult
to define combinatorially for arbitrary dimensions, making it impossible to 
define the degenerate subcomplex in general. Nevertheless, in the special case 
of biquandles, \textit{reduced 2--cocyles} were defined which allowed 
enhancement of the biquandle counting invariant, generalizing the CJKLS cocycle
invariants.

In this paper, we give a reformulation of the birack structure in terms of
actions of a set by an \textit{augmentation group} generalizing the augmented
quandle and augmented rack structures defined in \cite{J} and \cite{FR}. 
Our reformulation also restores the original approach taken in \cite{FRS1}
of using the ``sideways operations'' as the primary operations, as opposed
to the more usual approach of using the ``direct operations'' as primary.
This approach enables us to define a (co)homology theory for biracks with
a fully combinatorial formula for the boundary map, which we are able to
employ to identify the degenerate subcomplex associated to $N$-phone cord moves
for arbitrary biracks of finite
characteristic, generalizing the previous cases of quandle, $N$-reduced rack,
and Yang-Baxter (co)homology. As an application we use $N$-reduced augmented 
birack 2-cocycles to define cocycle enhancements of the birack counting 
invariant.

The paper is organized as follows. In Section \ref{AB} we introduce augmented
biracks. In Section \ref{ABH} we define augmented birack (co)homology and
discuss relationships with previously studied (co)homology theories. 
Section \ref{ABCI} deals with augmented birack cocycles and enhancement of the counting invariant.
In 
Section \ref{E} we give some examples of the new cocycle invariants and their
computation. We end in Section \ref{Q} with some questions for future research.

\section{Augmented Biracks}\label{AB}

\begin{definition}\label{def1}\textup{
Let $X$ be a set and $G$ be a subgroup of the group of
bijections $g:X\to X$. An \textit{augmented birack structure} on $(X,G)$
consists of maps $\alpha, \beta,\bar{\alpha},\bar{\beta}:X\to G$ 
(i.e., for each $x\in X$ we have bijections $\alpha_x:X\to X$, $\beta_x:X\to X$,
$\bar{\alpha_x}:X\to X$ and $\bar{\beta_x}:X\to X$) and a distinguished 
element $\pi\in G$ satisfying
\begin{itemize}
\item[(i)] For all $x\in X$, we have 
\[\alpha_{\pi(x)}(x)=\beta_x\pi(x) \quad \mathrm{and}\quad 
\bar{\beta_{\pi(x)}}(x)=\bar{\alpha_x}\pi(x), \]
\item[(ii)] For all $x,y\in X$ we have 
\[\bar{\alpha_{\beta_x(y)}}\alpha_y(x)=x, \quad
\bar{\beta_{\alpha_x(y)}}\beta_y(x)=x,\quad
\alpha_{\bar{\beta_x(y)}}\bar{\alpha_y}(x)=x,
 \quad\mathrm{and}\quad 
\beta_{\bar{\alpha_x(y)}}\bar{\beta_y}(x)=x,
\]
and
\item[(iii)] For all $x,y\in X$, we have
\[
\alpha_{\alpha_x(y)}\alpha_x=\alpha_{\beta_y(x)}\alpha_y,\quad
\beta_{\alpha_x(y)}\alpha_x=\alpha_{\beta_y(x)}\beta_y,\quad \mathrm{and}\quad
\beta_{\alpha_x(y)}\beta_x=\beta_{\beta_y(x)}\beta_y.
\]
\end{itemize}
}
\end{definition}

\begin{remark}\textup{
Alternatively, in definition \ref{def1} we could let $G$ be an arbitrary
group with an action $\cdot:G\times X\to X$ and maps
$\alpha,\beta,\bar{\alpha},\bar{\beta}:X\to G$ 
satisfying the listed conditions where $g(y)$ means $g\cdot y$.
}\end{remark}


\begin{example}\textup{
Let $\tilde{\Lambda}=\mathbb{Z}[t^{\pm 1}, s, r^{\pm 1}]/(s^2-(1-t^{-1}r)s)$, 
let $X$ be a $\tilde{\Lambda}$-module and let $G$ be the group of invertible 
linear transformations of $X$. Then $(G,X)$ is an augmented biquandle with
\[
\alpha_x(y) = ry, \quad
\beta_y(x) = tx-tsy, \quad
\bar{\alpha_y}(x) = r^{-1}x, \quad 
\bar{\beta_x}(y) = sr^{-1}x+t^{-1}y,
\quad\mathrm{and}\quad
\pi(x) = (t^{-1}r+s)x. 
\]
For example, we have
\[\beta_{\alpha_x(z)}\alpha_x(y) = \beta_{rz}(ry)= try-tsrz=r(ty-tsz)
=\alpha_{\beta_z(x)}\beta_z(y).\]
An augmented birack of this type is known as a \textit{$(t,s,r)$-birack}.
}\end{example}

\begin{example}
\textup{We can define an augmented birack structure symbolically on
the finite set $X=\{1,2,3,\dots,n\}$ by explicitly listing the maps
$\alpha_x,\beta_x:X\to X$ for each $x\in X$. This is conveniently done by 
giving a $2n\times n$ matrix whose upper block has $(i,j)$ entry $\alpha_j(i)$
and whose lower block has $(i,j)$ entry $\beta_j(i)$, which we might denote
by $M_{(G,X)}=\left[\begin{array}{c} 
\alpha_j(i) \\ \hline
\beta_j(i)
\end{array}\right]$. Such a matrix defines
an augmented birack with $G$ being the symmetric group $S_n$ provided the 
maps thus defined satisfy the augmented birack axioms; note that if the axioms
are satisfied, then the maps $\pi,\bar{\alpha_x}$ and $\bar{\beta_x}$ are
determined by the maps $\alpha_x,\beta_x$. For example, the matrix
\[M_{(G,X)}=\left[\begin{array}{ccc}
2 & 2 & 2 \\
1 & 1 & 1 \\
3 & 3 & 3 \\ \hline
2 & 3 & 1 \\
3 & 1 & 2 \\
1 & 2 & 3 \\
\end{array}\right]\]
encodes the $(t,s,r)$-birack structure on $X=\{1,2,3\}=\mathbb{Z}_3$ with
$t=1,s=2,r=2$.} 
\end{example}

An augmented birack defines a birack map 
$B:X\times X\to X\times X$ as defined in previous work by setting
\[B(x,y)=(\beta^{-1}_x(y),\alpha_{\beta^{-1}_x(y)}(x)).\]
The $G$-actions $\alpha_x,\beta_y$ are the components of the sideways map
in the notation of previous papers.

The geometric motivation for augmented biracks come from labeling semiarcs
in an oriented framed link diagram with elements of $X$; each Reidemeister 
move yields a set of necessary and sufficient conditions for labelings 
before and after the move to correspond bijectively.
\[\includegraphics{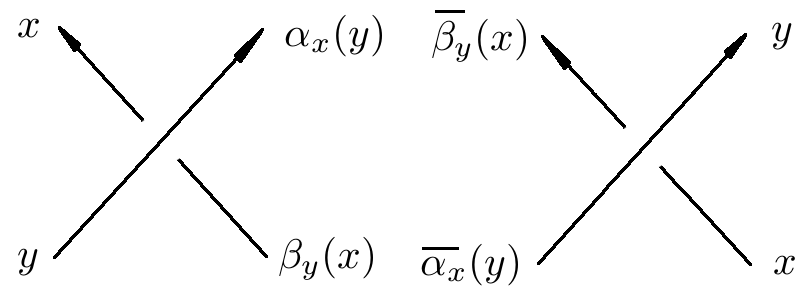}\]
The names are chosen so that if we orient a crossing, positive or negative,
with the strands oriented upward, then the unbarred actions go left-to-right
and the barred actions go right-to-left, with $\alpha$ and $\beta$ standing for
``above'' and ``below''\footnote{Thanks to Scott Carter for this observation!}.
Thus, $\alpha_x(y)$ is the result of $y$ going above $x$ left-to-right and
$\bar{\beta_y}(x)$ is the result of $x$ going below $y$ from right-to-left.

The element $\pi\in G$ is the \textit{kink map} which encodes the change of 
semiarc labels when going through a positive kink.
\[\includegraphics{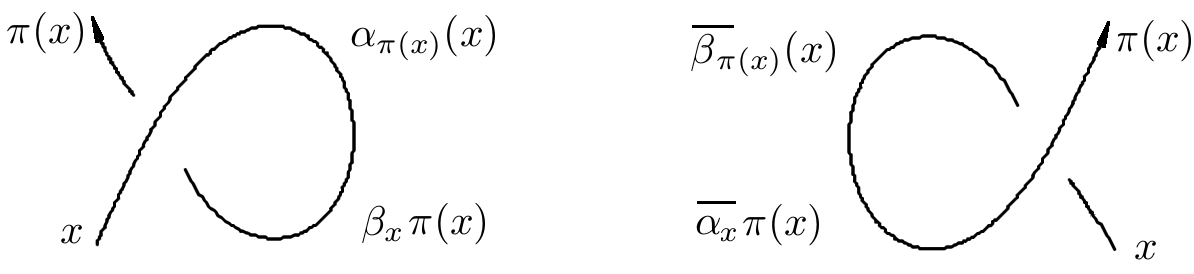}\]
In particular, each $(G,X)$-labeling of a framed oriented knot or link diagram
before a framed type I move corresponds to a unique $(G,X)$-labeling after
the move. If $\pi=1$ is the identity element in $G$, our augmented birack is an 
\textit{augmented biquandle}; labelings of an oriented link by an augmented
biquandle are independent of framing.

Axiom (ii) is equivalent to the condition that the map 
$S:X\times X\to X\times X$ defined by
\[S(x,y)=(\alpha_x(y),\beta_y(x))\]
is a bijection with inverse 
\[S^{-1}(y,x)=(\bar{\beta_y}(x),\bar{\alpha_x}(y)).\]
Note that the condition that the components of $S$ are bijective is not 
sufficient to make $S$ bijective; for instance, if $X$ is any abelian group, 
the map $S(x,y)=(\alpha_x(y),\beta_y(x))=(x+y,x+y)$ has bijective component maps
but is not bijective as a map of pairs.
The maps $\bar{\alpha_x},\bar{\beta_x}$ are the components of the inverse
of the sideways map; we can interpret them as labeling rules going right 
to left. At negatively oriented crossings, the top and bottom labels are 
switched.
\[\includegraphics{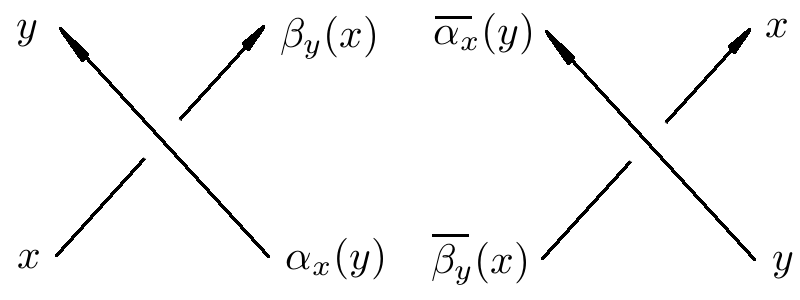}\]
Note that $(G,X)$-labelings of a framed knot or link correpond bijectively
before and after both forms of type II moves: \textit{direct type II} moves
where both strands are oriented in the same direction 
\[\includegraphics{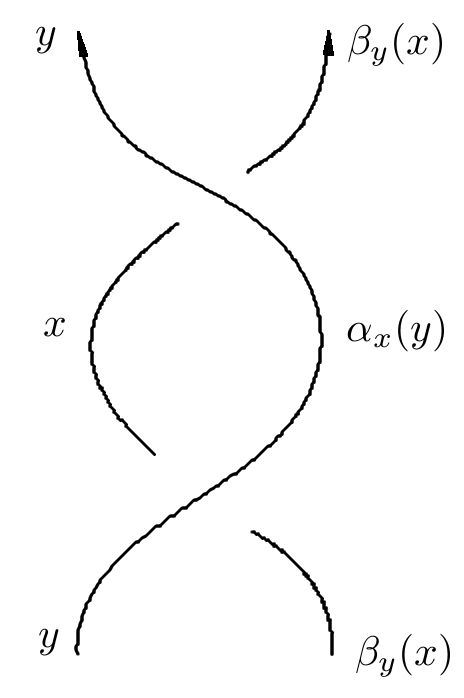} \quad\quad 
\raisebox{1in}{$\sim$}\quad\quad \includegraphics{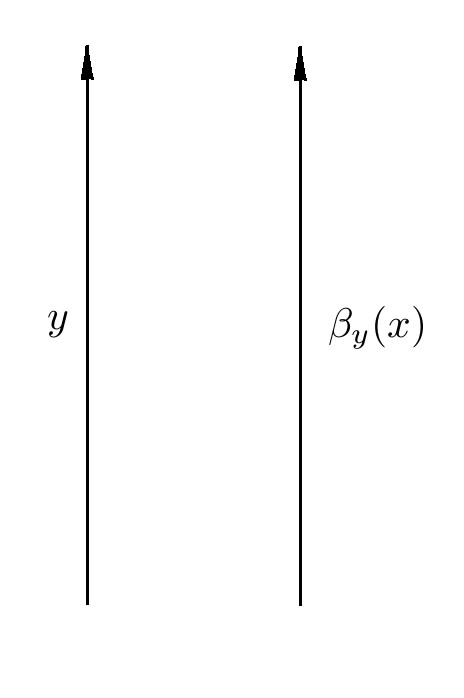}\]
and \textit{reverse type 
II} moves in which the strands are oriented in opposite directions.
\[\includegraphics{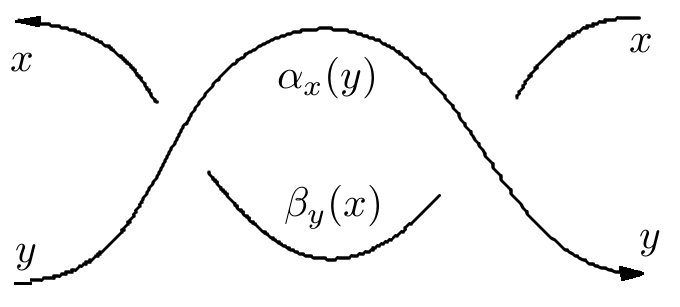} \quad
\raisebox{0.4in}{$\sim$}\quad \includegraphics{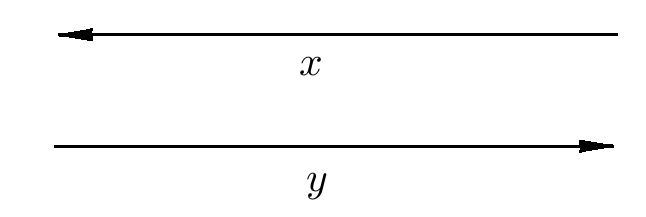}\]

Axiom (iii) encodes the conditions arising from the Reidemeister III move:
\[\includegraphics{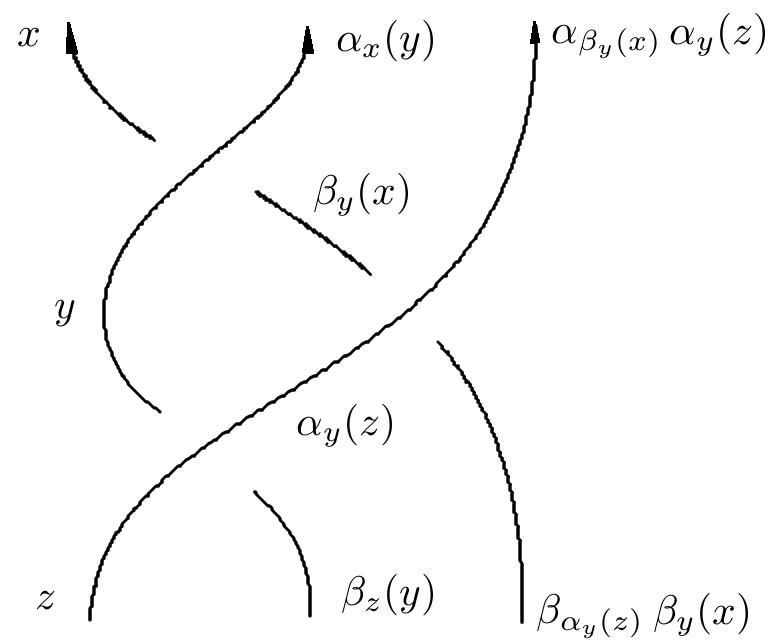} \quad\quad 
\raisebox{1in}{$\sim$}\quad\quad \includegraphics{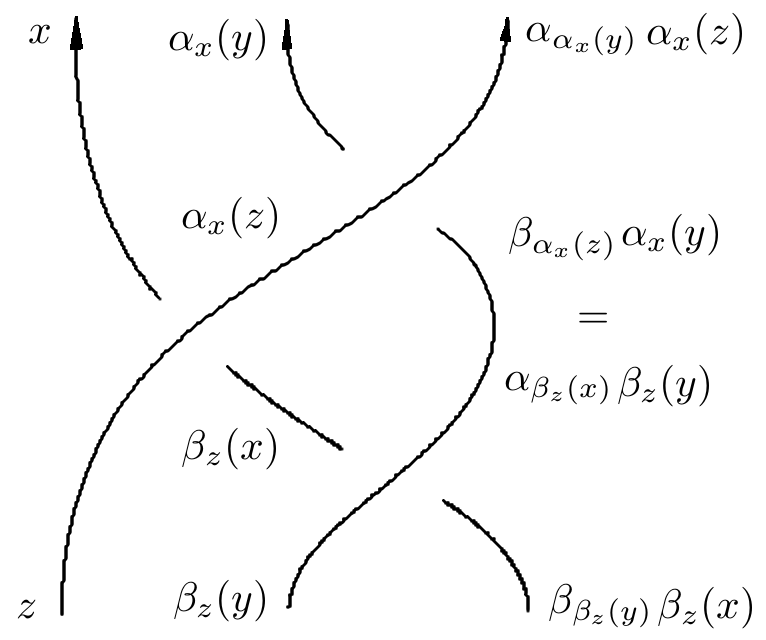}\]

Thus by construction we have
\begin{theorem}
If $L$ and $L'$ are oriented framed links related by oriented framed 
Reidemeister moves and $(G,X)$ is an augmented birack,
then there is a bijection between the set of labelings of $L$ by $(G,X)$,
denoted $\mathcal{L}(L,(G,X))$,
and the set of labelings and the set of labelings of $L'$ by $(G,X)$,
denoted $\mathcal{L}(L',(G,X))$.
\end{theorem}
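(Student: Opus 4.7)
The plan is to verify the theorem by checking, move by move, that every framed oriented Reidemeister move preserves the set of $(G,X)$-labelings bijectively, since the labeling condition is purely local at each crossing. Concretely, a labeling of a framed link diagram $D$ is an assignment of an element of $X$ to each semiarc of $D$ such that at every crossing the labels satisfy the rules pictured before the theorem statement (using $\alpha,\beta$ at positive crossings and $\bar\alpha,\bar\beta$ at negative crossings). Outside a small disk in which a Reidemeister move occurs, the diagrams for $L$ and $L'$ agree, so a labeling on one restricts to a labeling of the common complement; the task is to show that for each fixed boundary labeling on the disk, the number of extensions before and after the move is the same (in fact, canonically in bijection).

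First I would handle the framed type I move: the outside of the move-disk has a single incoming semiarc labeled by some $x\in X$, and I need the outgoing semiarc label to be determined in the same way before and after. This is exactly what the kink map $\pi$ is designed to do, with the equalities $\alpha_{\pi(x)}(x)=\beta_x\pi(x)$ and $\bar\beta_{\pi(x)}(x)=\bar\alpha_x\pi(x)$ of axiom (i) ensuring that the two semiarcs emerging from a positive kink agree whether one reads the crossing as an $\alpha$--equation or a $\beta$--equation. Next I would dispatch the two type II moves: the outside has two boundary labels, and a labeling of the two-crossing side amounts to computing $S$ followed by $S^{-1}$ (for the direct move) or an analogous composition for the reverse move. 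Axiom (ii) states precisely that $S$ and its right-to-left analogue are mutually inverse, which makes the labeling map trivially bijective on the move-disk.

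For the type III move, the disk has three incoming and three outgoing boundary labels; tracing the six semiarcs through the three crossings on each side of the move produces two six-tuples of labels that must agree for every choice of the three input labels. Writing these out and matching the induced labels of each internal semiarc reduces, after canceling common factors, to the three equalities $\alpha_{\alpha_x(y)}\alpha_x=\alpha_{\beta_y(x)}\alpha_y$, $\beta_{\alpha_x(y)}\alpha_x=\alpha_{\beta_y(x)}\beta_y$, and $\beta_{\alpha_x(y)}\beta_x=\beta_{\beta_y(x)}\beta_y$ of axiom (iii). This produces an explicit bijection on labelings across the move.

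Finally, I would observe that since framed oriented Reidemeister equivalence is generated by these moves, the bijections assemble, by composition over a sequence of moves, to a bijection $\mathcal{L}(L,(G,X))\to \mathcal{L}(L',(G,X))$. The main obstacle is bookkeeping rather than mathematics: one must carefully verify the type III case for all sign and orientation variants, but the variants with mixed crossing signs can be reduced to the all-positive case using the bijectivity of $S$ established in the type II step, so in the end only the three equations of axiom (iii) need to be checked directly.
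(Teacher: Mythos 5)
Your proposal is correct and follows essentially the same route as the paper, which establishes the theorem ``by construction'' through the preceding discussion: axiom (i) handles the framed type~I move via the kink map $\pi$, axiom (ii) handles both type~II moves via the invertibility of the sideways map $S$, and axiom (iii) handles the all-positive type~III move, with the mixed-sign variants reduced to it by the type~II step.
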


\begin{remark}\textup{
Augmented biracks include several previously studies algebraic structures
as special cases.
\begin{itemize}
\item As mentioned above, an augmented birack is a birack with birack map
\[B(a,b)=(a^b,b_a)=(\beta_{a}^{-1}(b),\alpha_{\beta_{a}^{-1}(b)}(a))\] and is a 
\textit{biquandle} if $\pi=\mathrm{Id}:X\to X$,
\item An augmented birack in which $\alpha_x=\mathrm{Id}:X\to X$ for all $X$
is an \textit{augmented rack} with augmentation group $G$ and 
augmentation map $\delta(y)=\beta_y^{-1}$, as well as a \textit{rack}
with rack operation $x\tr y=\beta_y^{-1}(x)$,
\item An augmented birack in which $\alpha_x=\mathrm{Id}:X\to X$ for all $X$
and $\pi=\mathrm{Id}:X\to X$
is an \textit{augmented quandle} with augmentation group $G$ and 
augmentation map $\delta(y)=\beta_y^{-1}$, as well as a \textit{quandle}
with rack operation $x\tr y=\beta_y^{-1}(x)$.
\end{itemize}
}\end{remark}

Let us now consider the case when $X$ is a finite set.
For any framed oriented link $L$ with
$n$ crossings, there are at most $|X|^{2n}$ possible $(G,X)$-labelings of $L$,
so $\mathcal{L}(L,(G,X))$ is a positive integer-valued invariant of framed 
oriented links. More generally, if we choose an ordering on the components
of a $c$-component link $L$, then framings on $L$ correspond to elements
$\vec{w}\in\mathbb{Z}^c$ and we have a $c$-dimensional integral lattice
of framings of $L$.

If $X$ is a finite set, then $G$ is a subgroup of the symmetric group
$S_{|X|}$; in particular there is a unique smallest positive integer $N$
such that $\pi^N=1\in G$. This $N$ is called the \textit{characteristic} or
\textit{birack rank} of the augmented birack $(G,X)$. The value of 
$\mathcal{L}(L,(G,X))$ is unchanged by \textit{$N$-phone cord moves}:
\[\includegraphics{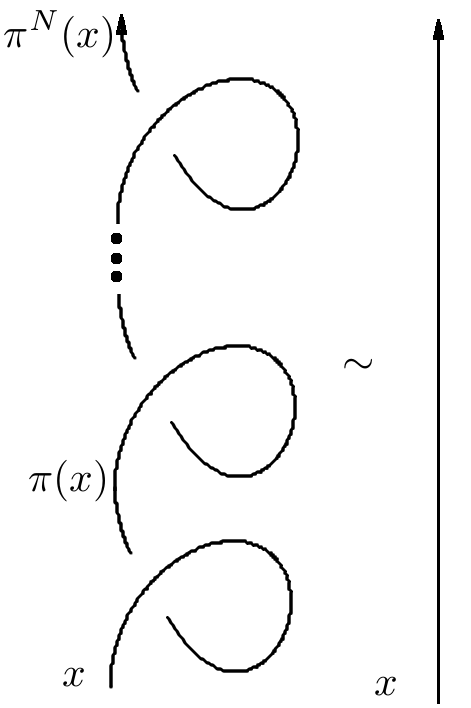}\]
In particular, framed oriented links which are equivalent by framed oriented
Reidemeister moves and $N$-phone cord moves have the same 
$\mathcal{L}(L,(G,X))$-values, and links which differ only by framing with
framing vectors equivalent mod $N$ have the same $\mathcal{L}(L,(G,X))$-values.
Hence, the $c$-dimensional lattice of values of $\mathcal{L}(L,(G,X))$
is tiled with a $c$-dimensional tile of side length $N$. We can thus obtain
an invariant of the unframed link by summing the $\mathcal{L}(L,(G,X))$-values
over a single tile.

\begin{definition}\textup{
Let $L$ be a link of $c$ components and $(G,X)$ a finite augmented birack.
Then the \textit{integral augmented birack counting invariant} of $L$ is the 
sum over one tile of framings mod $N$ of the numbers of $(G,X)$-labelings of
$L$. That is,
\[\Phi_{(G,X)}^{\mathbb{Z}}(L)
=\sum_{\vec{w}\in (\mathbb{Z}_N)^c} \mathcal{L}(L_{\vec{w}},(G,X)).\]
where $L_{\vec{w}}$ is $L$ with framing vector $\vec{w}$.
}\end{definition}

\section{Augmented Birack Homology}\label{ABH}

Let $(X,G)$ be an augmented birack. Let $C_n=\mathbb{Z}[X^n]$ be the free 
abelian group generated by ordered $n$-tuples of elements of $X$ and let 
$C^n(X)=\{f:C_n\to \mathbb{Z} \ | \ f \ \mathbb{Z}\mathrm{-linear\ transformation}\}$. For
$k=1,2,\dots, n$, define maps $\partial'_k,\partial''_k:C_n(X)\to C_{n-1}(X)$
by
\[\partial'_k(x_1.\dots,x_n)=(x_1,\dots,\widehat{x_k},\dots,x_n)\]
and
\[\partial''_k(x_1,\dots,x_n)=(\beta_{x_k}(x_1),\dots,\beta_{x_k}(x_{k-1}),\widehat{x_k},\alpha_{x_k}(x_{k+1}),\dots,\alpha_{x_k}(x_n))\]
where the\  $\widehat{\ }$\  indicates that the entry is deleted, i.e.
\[(x_1,\dots,\widehat{x_k},\dots,x_n)
=(x_1,\dots,x_{k-1},x_{k+1},\dots,x_n)
\]

\begin{theorem}\label{thm:boundary}
The map $\partial_n:C_n(X)\to C_{n-1}(X)$ given by
\[\partial_n(\vec{x})=\sum_{k=1}^{n} (-1)^k(\partial'_k(\vec{x})-\partial''_k(\vec{x}))\]
is a boundary map; the map $\delta^n:C^n(X)\to C^{n+1}(X)$ given by 
$\delta^n(f)=f\partial_{n+1}$ is the corresponding coboundary map. The quotient group 
$H_n(X)=\mathrm{Ker\ } \partial_n/\mathrm{Im\ } \partial_{n+1}$ is the 
\textit{$n$th augmented birack homology} of $(X,G)$, and the quotient group
$H^n(X)=\mathrm{Ker\ }\delta^n/\mathrm{Im\ } \delta^{n-1}$ is the 
\textit{$n$th augmented birack cohomology} of $(X,G)$.
\end{theorem}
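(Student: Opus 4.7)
The plan is to reduce the claim to four ``simplicial-style'' commutation identities for the face-like operators $\partial'_k$ and $\partial''_k$, after which the standard simplicial cancellation argument applies. The cohomology statement $\delta^{n+1}\delta^n=0$ is formally equivalent to the homology claim, since $(\delta^{n+1}\delta^n)(f)=f\partial_{n+1}\partial_{n+2}$, so I will concentrate on proving $\partial_{n-1}\partial_n=0$.

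Expanding gives
\[
\partial_{n-1}\partial_n=\sum_{j=1}^{n-1}\sum_{k=1}^{n}(-1)^{j+k}(\partial'_j-\partial''_j)(\partial'_k-\partial''_k),
\]
and the key step is to verify, for all $1\le j<k\le n$, the four identities
\[
\partial'_j\partial'_k=\partial'_{k-1}\partial'_j,\ \ \partial''_j\partial'_k=\partial'_{k-1}\partial''_j,\ \ \partial'_j\partial''_k=\partial''_{k-1}\partial'_j,\ \ \partial''_j\partial''_k=\partial''_{k-1}\partial''_j.
\]
Granting these, for $j<k$ we have $(\partial'_j-\partial''_j)(\partial'_k-\partial''_k)=(\partial'_{k-1}-\partial''_{k-1})(\partial'_j-\partial''_j)$, and the reindexing $k\mapsto k+1$ (together with the sign change $(-1)^{j+k}=-(-1)^{j+(k-1)}$) transforms the $j<k$ portion of the double sum into the negative of the $j\ge k$ portion, so the two halves cancel and $\partial_{n-1}\partial_n=0$.

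The first identity is the standard simplicial face relation for deletions and is immediate. The two mixed identities are straightforward bookkeeping: once one tracks the position of the pivot entry on each side and accounts for the index shift caused by the deletion, the element of $G$ applied to each surviving entry is the same on both sides, so no birack axiom is required. The main obstacle is the identity $\partial''_j\partial''_k=\partial''_{k-1}\partial''_j$, which is where the birack axioms actually come into play. Writing out both sides entry-by-entry and comparing at the three possible positions $i<j$, $j<i<k$, and $i>k$ produces three required equalities in $G$, namely
\[
\beta_{\beta_{x_k}(x_j)}\beta_{x_k}=\beta_{\alpha_{x_j}(x_k)}\beta_{x_j},\ \ \alpha_{\beta_{x_k}(x_j)}\beta_{x_k}=\beta_{\alpha_{x_j}(x_k)}\alpha_{x_j},\ \ \alpha_{\beta_{x_k}(x_j)}\alpha_{x_k}=\alpha_{\alpha_{x_j}(x_k)}\alpha_{x_j},
\]
and these are exactly the three relations of axiom (iii) of Definition \ref{def1} applied with $x=x_j$ and $y=x_k$. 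Thus axiom (iii) is precisely what encodes the coherence needed for $\partial$ to be a boundary operator; axioms (i) and (ii) do not enter this computation and will only become relevant when identifying the degenerate subcomplex in the $N$-reduced quotient.
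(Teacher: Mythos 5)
Your proposal is correct and follows essentially the same route as the paper: both reduce $\partial_{n-1}\partial_n=0$ to the four commutation identities for $\partial'_j,\partial''_j$ (which the paper proves as four separate lemmas), isolate $\partial''_j\partial''_k=\partial''_{k-1}\partial''_j$ as the one requiring axiom (iii), and conclude with the usual simplicial cancellation. Your additional step of writing out the three $G$-equalities position-by-position and matching them to axiom (iii) with $x=x_j$, $y=x_k$ is a correct sharpening of the paper's terser ``the two are equal after application of the augmented birack axioms.''
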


To prove theorem \ref{thm:boundary}, we will find it convenient to first prove a few key lemmas.

\begin{lemma}
Let $j<k$. Then $\partial'_j\partial'_k(\vec{x})=\partial'_{k-1}\partial'_{j}$.
\end{lemma}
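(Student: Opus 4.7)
The plan is to unfold both sides of the claimed identity directly from the definition of $\partial'_k$, which simply deletes the $k$-th coordinate. The assertion is the standard simplicial face identity, and it should follow from bookkeeping on indices rather than from any augmented birack axiom. I expect no serious obstacle here; the only care needed is keeping track of how indices shift when an earlier entry has already been removed.

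More concretely, I would start on the left side. Given $\vec{x}=(x_1,\dots,x_n)$, applying $\partial'_k$ yields the $(n-1)$-tuple $(x_1,\dots,x_{k-1},x_{k+1},\dots,x_n)$. Since $j<k$, the entry in position $j$ of this shorter tuple is still $x_j$, so applying $\partial'_j$ produces
\[
\partial'_j\partial'_k(\vec{x})=(x_1,\dots,\widehat{x_j},\dots,\widehat{x_k},\dots,x_n),
\]
i.e.\ the tuple with both $x_j$ and $x_k$ removed.

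Next I would analyze the right side. Applying $\partial'_j$ to $\vec{x}$ gives $(x_1,\dots,x_{j-1},x_{j+1},\dots,x_n)$; in this $(n-1)$-tuple the original entry $x_k$ now sits in position $k-1$ (because one earlier entry has been deleted and $j<k$). Thus $\partial'_{k-1}$ removes precisely $x_k$, yielding the same tuple
\[
\partial'_{k-1}\partial'_j(\vec{x})=(x_1,\dots,\widehat{x_j},\dots,\widehat{x_k},\dots,x_n).
\]
Comparing the two expressions completes the proof. (A small notational remark: the statement as printed has $\partial'_{k-1}\partial'_j$ on the right without an argument $\vec{x}$, which I read as a typo to be corrected.)
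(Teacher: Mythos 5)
Your proof is correct and follows essentially the same index-bookkeeping argument as the paper: compute both compositions directly, noting that deleting the $j$th entry first shifts $x_k$ into position $k-1$, so both sides yield the tuple with $x_j$ and $x_k$ removed. Your observation about the missing $\vec{x}$ on the right-hand side of the lemma statement is also correct; it is indeed a typo.
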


\begin{proof}
We compute
\begin{eqnarray*}
\partial'_j\partial'_k(\vec{x}) 
& = & \partial'_j(x_1,\dots,\widehat{x_k},\dots,x_n) \\
& = & \partial'_j(x_1,\dots,\widehat{x_j}\dots,\widehat{x_k},\dots,x_n) \\
\end{eqnarray*}
obtaining the input vector with the entries in the $j$th and $k$th positions 
deleted.
On the other hand, if we first delete the $j$th entry, each entry with
subscript greater than $j$ is now shifted into one lower position; in 
particular, $x_k$ is now in the $(k-1)$st position and we have
\begin{eqnarray*}
\partial'_{k-1}\partial'_{j}(\vec{x}) 
& = & \partial'_j(x_1,\dots,\widehat{x_j},\dots,x_n) \\
& = & \partial'_j(x_1,\dots,\widehat{x_j},\dots,\widehat{x_{k}},\dots,x_n) \\
\end{eqnarray*}
as required.
\end{proof}

\begin{corollary}\label{partial1}
The map $\partial':C_n\to C_{n-1}$ defined by $\displaystyle \sum_{k=1}^n (-1)^k\partial'(\vec{x})$ is a boundary map.
\end{corollary}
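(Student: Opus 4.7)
The plan is to prove $\partial'\circ \partial'=0$ by the standard simplicial-style cancellation argument, using the commutation identity $\partial'_j\partial'_k=\partial'_{k-1}\partial'_j$ (for $j<k$) established in the preceding lemma. This is the same pattern that shows the usual simplicial boundary squares to zero; the only content here is that our $\partial'_k$ satisfies the face-map relation.

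First I would expand
\[
\partial'\bigl(\partial'(\vec{x})\bigr)
=\sum_{j=1}^{n-1}\sum_{k=1}^{n}(-1)^{j+k}\,\partial'_j\partial'_k(\vec{x}),
\]
noting that the outer $\partial'$ acts on $C_{n-1}$ (so $1\le j\le n-1$) while the inner $\partial'$ acts on $C_n$ (so $1\le k\le n$). I would then split the double sum into the region $j<k$ and the region $j\ge k$. On the region $j<k$ the lemma lets me replace $\partial'_j\partial'_k$ by $\partial'_{k-1}\partial'_j$. Performing the change of variables $m=k-1,\ \ell=j$ converts that region to $1\le\ell\le m\le n-1$, and the sign becomes $(-1)^{j+k}=(-1)^{\ell+m+1}=-(-1)^{\ell+m}$. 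After relabeling $m\mapsto j,\ \ell\mapsto k$, this piece is exactly $-\sum_{1\le k\le j\le n-1}(-1)^{j+k}\partial'_j\partial'_k$, which is the negative of the second piece. The two sums cancel term by term, giving $\partial'\circ\partial'=0$.

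The main obstacle, to the extent that there is one, is purely bookkeeping: I must check that the shifted range $1\le \ell\le m\le n-1$ coming out of the substitution precisely matches the unshifted range $1\le k\le j\le n-1$ from the $j\ge k$ part, so that every term of one sum is paired with a single term of the other. This is automatic because the outer $\partial'$ caps the largest outer index at $n-1$, but it is worth verifying the endpoints $m=n-1$ and $\ell=m$ explicitly. No further ideas beyond the lemma are needed, so the corollary follows directly from the standard $\partial^2=0$ cancellation once the sign accounting is done carefully.
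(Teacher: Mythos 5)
Your proposal is correct and follows essentially the same cancellation argument the paper uses: split the double sum, apply the face-map relation $\partial'_j\partial'_k=\partial'_{k-1}\partial'_j$ for $j<k$, and observe the resulting reindexed sum cancels the remaining sum. You are a bit more careful than the paper's one-line version in splitting the index region as $j<k$ versus $j\ge k$ (rather than $j<k$ versus $j>k$), which correctly accounts for the diagonal $j=k$ terms; after the substitution $m=k-1$, $\ell=j$, the $j<k$ region lands exactly on $\{1\le\ell\le m\le n-1\}$, matching the $j\ge k$ region.
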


\begin{proof}
If we apply $\partial'$ twice, each term with first summation index less than
the second summation index is matched by an equal term with first summation
index greater than the second summation index but of opposite sign:
\begin{eqnarray*}
\partial'(\partial'(\vec{x}))
& = & \sum_{j<k}(-1)^{j+k}\partial'_j\partial'_k(\vec{x})+\sum_{j>k}(-1)^{j+k}\partial'_j(\partial'_k(\vec{x}))\\
& = & \sum_{j>k}(-1)^{j+k+1}\partial'_j\partial'_k(\vec{x})+\sum_{j>k}(-1)^{j+k}\partial'_j(\partial'_k(\vec{x}))\\
& = & 0.
\end{eqnarray*}
\end{proof}

\begin{lemma}
If $j<k$ we have $\partial'_{j}\partial''_k(\vec{x})=\partial''_{k-1}\partial'_{j}(\vec{x})$.
\end{lemma}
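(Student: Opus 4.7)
The plan is to verify this identity by direct computation, just as in the previous lemma, since no birack axioms are actually needed here: the assertion is purely a bookkeeping fact about how the two kinds of face maps interact when the deletion index lies strictly to the left of the $\beta/\alpha$-twisting index. The whole point is that $\partial''_k$ acts by $\beta_{x_k}$ uniformly on every coordinate with index $<k$ and by $\alpha_{x_k}$ uniformly on every coordinate with index $>k$, so deleting one of the $\beta$-twisted coordinates commutes with the twisting provided we track the index shift correctly.

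Concretely, I would start by expanding the left-hand side. Applying $\partial''_k$ first yields
\[
\bigl(\beta_{x_k}(x_1),\ldots,\beta_{x_k}(x_{k-1}),\widehat{x_k},\alpha_{x_k}(x_{k+1}),\ldots,\alpha_{x_k}(x_n)\bigr),
\]
and then applying $\partial'_j$ (with $j<k$, so the $j$th entry here is $\beta_{x_k}(x_j)$) simply deletes that entry:
\[
\partial'_j\partial''_k(\vec{x})=\bigl(\beta_{x_k}(x_1),\ldots,\widehat{\beta_{x_k}(x_j)},\ldots,\beta_{x_k}(x_{k-1}),\widehat{x_k},\alpha_{x_k}(x_{k+1}),\ldots,\alpha_{x_k}(x_n)\bigr).
\]

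Next I would expand the right-hand side. Applying $\partial'_j$ first gives $(x_1,\ldots,\widehat{x_j},\ldots,x_n)$; after this deletion, the element $x_k$ has been shifted into position $k-1$, which is precisely why $\partial''_{k-1}$ (rather than $\partial''_k$) is the correct partner. Applying $\partial''_{k-1}$ then acts by $\beta_{x_k}$ on the entries in positions $1,\ldots,k-2$ (that is, on $x_1,\ldots,x_{j-1},x_{j+1},\ldots,x_{k-1}$), deletes $x_k$, and applies $\alpha_{x_k}$ to $x_{k+1},\ldots,x_n$, producing the very same tuple.

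The main obstacle, such as it is, is purely notational: I must make sure the case $j<k$ really does force $j<k-1$ after the shift (so that $j$ remains in the $\beta_{x_k}$ region of $\partial''_{k-1}$, with no accidental collision) and that the deleted slot and the surviving slots match up on both sides. No birack axioms are invoked; the identity is a combinatorial consequence of the fact that $\partial''_k$ applies a single element-independent operator $\beta_{x_k}$ to an entire contiguous block, so deletion within that block commutes with the operator.
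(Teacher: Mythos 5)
Your proof is correct and follows the same direct computational approach as the paper: expand both sides, track the index shift caused by the deletion, and observe that the resulting tuples coincide (indeed, as you note, no birack axioms are needed). One small clarification on your ``main obstacle'': $j<k$ does \emph{not} force $j<k-1$ (the case $j=k-1$ is allowed), but the computation still works there, since the $\beta_{x_k}$-block strictly between positions $j$ and $k-1$ is simply empty.
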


\begin{proof}
On the one hand, 
\begin{eqnarray*}
\partial'_{j}\partial''_k(\vec{x})
& = & \partial'_j(\beta_{x_k}(x_1),\dots,\beta_{x_k}(x_{k-1}),\widehat{x_k},\alpha_{x_k}(x_{k+1}),\dots,\alpha_{x_k}(x_n)) \\
& = & (\beta_{x_k}(x_1),\dots,\beta_{x_k}(x_{j-1}),\widehat{\beta_{x_k}(x_j)},\beta_{x_k}(x_{j+1}),\dots,\beta_{x_k}(x_{k-1}),\alpha_{x_k}(x_{k+1}),\dots,\alpha_{x_k}(x_n))) \\
\end{eqnarray*}
On the other hand, applying $\partial'_j$ first shifts $x_{k}$ into the $(k-1)$ 
position and we have
\begin{eqnarray*}
\partial_{k+1}''\partial'_j(\vec{x})
& = & \partial''_{k+1}(x_1,\dots,\widehat{x_j},\dots,x_n)\\
& = & (\beta_{x_k}(x_1),\dots,\beta_{x_k}(x_{j-1}),\widehat{x_j},\beta_{x_k}(x_{j+1}),\dots,\beta_{x_k}(x_{k-1}),\widehat{x_k},\alpha_{x_k}(x_{k+1}),\dots,\alpha_{x_k}(x_n))) \\
\end{eqnarray*}
as required.

\end{proof}

\begin{lemma}
If $j<k$ we have $\partial''_{j}\partial'_k(\vec{x})=\partial'_{k-1}\partial''_{j}(\vec{x})$.
\end{lemma}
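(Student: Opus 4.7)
The plan is to mimic the bookkeeping argument of the two preceding lemmas: unwind both sides explicitly as $(n-2)$-tuples and verify that they agree term by term. Since $j<k$, neither of the two operations $\partial'_k$ and $\partial''_j$ touches or is blocked by the other, so everything should reduce to careful index shifting.

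First I will compute the left-hand side $\partial''_j\partial'_k(\vec{x})$. Applying $\partial'_k$ first gives $(x_1,\dots,x_{k-1},x_{k+1},\dots,x_n)$, a tuple of length $n-1$ whose $j$th entry is still $x_j$ because $j<k$. Applying $\partial''_j$ then conjugates the entries to the left of position $j$ by $\beta_{x_j}$, deletes $x_j$, and applies $\alpha_{x_j}$ to all the remaining entries to the right of position $j$; the result is
\[
(\beta_{x_j}(x_1),\dots,\beta_{x_j}(x_{j-1}),\widehat{x_j},\alpha_{x_j}(x_{j+1}),\dots,\alpha_{x_j}(x_{k-1}),\alpha_{x_j}(x_{k+1}),\dots,\alpha_{x_j}(x_n)).
\]

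Next I will compute the right-hand side $\partial'_{k-1}\partial''_j(\vec{x})$. Applying $\partial''_j$ produces the tuple $(\beta_{x_j}(x_1),\dots,\beta_{x_j}(x_{j-1}),\widehat{x_j},\alpha_{x_j}(x_{j+1}),\dots,\alpha_{x_j}(x_n))$. Under this operation, the original entry $x_k$ has been shifted down by one position (to position $k-1$) because the $j$th slot was removed, and it has acquired the label $\alpha_{x_j}(x_k)$ since $k>j$. Applying $\partial'_{k-1}$ therefore excises the term $\alpha_{x_j}(x_k)$, yielding exactly the same tuple as computed on the left-hand side.

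The main potential pitfall is just the simultaneous bookkeeping of two things: the one-step shift in position caused by deleting $x_j$, and the relabeling of every entry past position $j$ by $\alpha_{x_j}$. Once one confirms that $j<k$ puts $x_k$ strictly to the right of $x_j$ (so it gets $\alpha_{x_j}$ applied, and its index shifts from $k$ to $k-1$), the identity is immediate. No augmented birack axiom is needed; this lemma, like the previous two, is a purely combinatorial identity about the face maps $\partial'$ and $\partial''$.
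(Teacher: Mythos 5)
Your proof is correct and follows essentially the same approach as the paper: write out both sides explicitly as $(n-2)$-tuples, track the one-place index shift caused by deleting the entry in position $j$, and observe that the surviving entry in position $k-1$ is $\alpha_{x_j}(x_k)$ so that $\partial'_{k-1}$ removes exactly the right term. Your observation that no augmented birack axiom is needed, unlike the $\partial''\partial''$ case, also matches the paper.
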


\begin{proof}
On the one hand, 
\begin{eqnarray*}
\partial''_{j}\partial'_k(\vec{x})
& = & \partial''_j(x_1,\dots,\widehat{x_k},\dots,x_n) \\
& = & (\beta_{x_j}(x_1),\dots,\beta_{x_j}(x_{j-1}),\widehat{x_j},\alpha_{x_j}(x_{j+1}),\dots,\alpha_{x_j}(x_{k-1}),\widehat{x_k},\alpha_{x_j}(x_{k+1}),\dots,\alpha_{x_j}(x_n))) \\
\end{eqnarray*}
As above, applying $\partial''_j$ shifts the positions of the entries with
indices greater than $j$, and we have
\begin{eqnarray*}
\partial'_{k-1}\partial''_{j}(\vec{x}) & = & 
\partial'_{k-1}(\beta_{x_j}(x_1),\dots,\beta_{x_j}(x_{j-1}),\widehat{x_j},\alpha_{x_j}(x_{j+1}),\dots, \alpha_{x_j}(x_n))\\
& = & 
(\beta_{x_j}(x_1),\dots,\beta_{x_j}(x_{j-1}),\widehat{x_j},\alpha_{x_j}(x_{j+1}),\dots,\alpha_{x_j}(x_{k-1}),\widehat{\alpha_{x_j}(x_k)},\alpha_{x_j}(x_{k+1}),\dots \alpha_{x_j}(x_n))\\
\end{eqnarray*}
as required.
\end{proof}

The final lemma depends on the augmented birack axioms.

\begin{lemma}
If $j<k$ we have $\partial''_{j}\partial''_k(\vec{x})=\partial''_{k-1}\partial''_{j}(\vec{x})$.
\end{lemma}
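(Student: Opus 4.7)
The plan is to expand both sides of the identity directly by substituting the definition of $\partial''$ twice, and then to match entries position-by-position. The three augmented birack axioms in (iii) of Definition~\ref{def1} will turn out to govern the three distinct ranges of indices --- below $j$, strictly between $j$ and $k$, and above $k$ --- with one axiom per range. Unlike the three preceding lemmas, this one genuinely requires the birack structure, since non-trivial compositions of $\alpha$'s and $\beta$'s will appear on each surviving entry.

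First I would apply $\partial''_k$ to $\vec{x}$, producing a tuple whose $i$th entry is $\beta_{x_k}(x_i)$ for $i<k$, with $x_k$ deleted, and with $\alpha_{x_k}(x_i)$ for $i>k$. Next I would apply $\partial''_j$ to this tuple; the distinguished entry at position $j$ is now $\beta_{x_k}(x_j)$, so the outer operator acts as $\beta_{\beta_{x_k}(x_j)}$ on entries with index less than $j$ and as $\alpha_{\beta_{x_k}(x_j)}$ on entries with index greater than $j$. For the right-hand side I would reverse the order: applying $\partial''_j$ first shifts $x_k$ into position $k-1$ as $\alpha_{x_j}(x_k)$, and the outer $\partial''_{k-1}$ then acts with $\beta_{\alpha_{x_j}(x_k)}$ on lower-indexed entries and with $\alpha_{\alpha_{x_j}(x_k)}$ on higher-indexed entries.

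Comparing entries reduces the lemma to three operator identities. For $i<j$ I need $\beta_{\beta_{x_k}(x_j)}\beta_{x_k}=\beta_{\alpha_{x_j}(x_k)}\beta_{x_j}$, which is the third relation of axiom (iii) with $x=x_j$, $y=x_k$. For $j<i<k$ I need $\alpha_{\beta_{x_k}(x_j)}\beta_{x_k}=\beta_{\alpha_{x_j}(x_k)}\alpha_{x_j}$, which is the middle relation with the same substitution. For $i>k$ I need $\alpha_{\beta_{x_k}(x_j)}\alpha_{x_k}=\alpha_{\alpha_{x_j}(x_k)}\alpha_{x_j}$, which is the first relation. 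Thus each of the three position ranges is handled by exactly one of the three identities in axiom (iii).

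The main obstacle is purely bookkeeping: after a deletion, all higher-indexed entries shift down by one, so one must verify that the outer operator in $\partial''_{k-1}\partial''_j$ is really indexed by the image under $\alpha_{x_j}$ of the original $x_k$, and similarly that the surviving entries pair up correctly on both sides. Once this shifting is tracked carefully and the three operator identities are isolated, the conclusion is an immediate appeal to axiom (iii).
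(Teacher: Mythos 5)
Your proposal is correct and follows essentially the same route as the paper: expand both sides of $\partial''_j\partial''_k = \partial''_{k-1}\partial''_j$ by applying the definition of $\partial''$ twice, carefully track the index shift from deletion, and then appeal to axiom (iii). In fact you give slightly more detail than the paper, which stops at ``the two are equal after application of the augmented birack axioms'' without spelling out, as you do, that the third, second, and first identities in (iii) handle the ranges $i<j$, $j<i<k$, and $i>k$ respectively under the substitution $x=x_j$, $y=x_k$.
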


\begin{proof}
We have
\begin{eqnarray*}
\partial''_{j}\partial''_{k}(\vec{x}) & = & 
\partial''_{j}(\beta_{x_k}(x_1),\dots,\beta_{x_k}(x_{k-1}),\widehat{x_k},\alpha_{x_k}(x_{k+1}),\dots,\alpha_{x_k}(x_n))\\
& = & (\beta_{\beta_{x_k}(x_j)}\beta_{x_k}(x_1),\dots,\beta_{\beta_{x_k}(x_j)}\beta_{x_k}(x_{j-1}),\widehat{\beta_{x_k}(x_j)},\alpha_{\beta_{x_k}(x_j)}\beta_{x_k}(x_{j+1}),\dots, \\
& & \quad\quad\quad\quad\quad
\alpha_{\beta_{x_k}(x_j)}\beta_{x_k}(x_{k-1}),\widehat{x_k},\alpha_{\beta_{x_k}(x_j)}\alpha_{x_k}(x_{k+1}),\dots,\alpha_{\beta_{x_k}(x_j)}\alpha_{x_k}(x_n))\\
\end{eqnarray*}
while again applying $\partial''_j$ first shifts the positions of the entries
with indices greater than $j$, and we have
\begin{eqnarray*}
\partial''_{k-1}\partial''_{j}(\vec{x}) 
& = &
\partial''_{k-1}(\beta_{x_j}(x_1),\dots,\beta_{x_j}(x_{j-1}),\widehat{x_j},\alpha_{x_j}(x_{j+1}),\dots,\alpha_{x_j}(x_n)) \\
& = & (\beta_{\alpha_{x_{j}(x_k)}}\beta_{x_j}(x_1),\dots,\beta_{\alpha_{x_{j}(x_k)}}\beta_{x_j}(x_{j-1}),\widehat{x_j},\beta_{\alpha_{x_{j}(x_k)}}\alpha_{x_j}(x_{j+1}),\dots, \\
& & \quad\quad\quad
\beta_{\alpha_{x_{j}(x_k)}}\alpha_{x_j}(x_{k-1}),\widehat{\alpha_{x_j}(x_k)},\alpha_{\alpha_{x_k}(x_j)}\alpha_{x_j}(x_{k+1}),\dots,
\alpha_{\alpha_{x_k}(x_j)}\alpha_{x_j}(x_{n})) \\
\end{eqnarray*}
and the two are equal after application of the augmented birack axioms.
\end{proof}

\begin{corollary}\label{partial2}
The map $\partial'':C_n\to C_{n-1}$ defined by $\displaystyle \partial''(\vec{x})=\sum_{k=1}^n (-1)^k \partial''_k(\vec{x})$ is a boundary map.
\end{corollary}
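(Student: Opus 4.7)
The plan is to follow the proof of Corollary \ref{partial1} step for step, replacing every $\partial'$ with $\partial''$ and using the preceding lemma (namely $\partial''_j\partial''_k = \partial''_{k-1}\partial''_j$ whenever $j<k$) in place of the analogous identity for $\partial'$ that drove that earlier argument.

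Concretely, I would expand
\[\partial''\bigl(\partial''(\vec{x})\bigr) = \sum_{j,k}(-1)^{j+k}\partial''_j\partial''_k(\vec{x})\]
and split the double sum into the portion with $j<k$ and the portion with $j\geq k$. On the $j<k$ portion I apply the lemma to replace each $\partial''_j\partial''_k$ by $\partial''_{k-1}\partial''_j$. Re-indexing via $(a,b) = (k-1,j)$ then converts this into a sum over exactly the same index set as the $j\geq k$ portion, but with sign $(-1)^{b+(a+1)} = -(-1)^{a+b}$, so the two pieces cancel term by term and the total is $0$.

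The argument is purely formal: all of the augmented birack content has already been consumed in proving the four commutation lemmas above, and in particular the birack axioms do not re-enter at this stage. There is no serious obstacle; the only point requiring any care is the sign bookkeeping under re-indexing, which is identical in structure to what already appears in Corollary \ref{partial1}.
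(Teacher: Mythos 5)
Your proposal is correct and follows essentially the same route as the paper: both apply the commutation lemma $\partial''_j\partial''_k = \partial''_{k-1}\partial''_j$ (for $j<k$) and then re-index so that the $j<k$ block cancels the complementary block sign-for-sign, just as in the proof of Corollary \ref{partial1}. If anything you are slightly more careful than the paper's wording, correctly splitting into $j<k$ versus $j\geq k$ (the complement after re-indexing by $(a,b)=(k-1,j)$), whereas the paper loosely writes $j>k$.
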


\begin{proof}
As with $\partial'$, we observe that every term in 
$\partial''_{n-1}\partial''_n(\vec{x})$ with $j<k$ is matched by an 
equal term with $j>k$ but with opposite sign.
\end{proof}

\begin{remark}\textup{
Corollary \ref{partial2} shows that the conditions in augmented birack axiom 
(iii)
are precisely the conditions required to make $\partial''$ a boundary map. 
This provides a non-knot theoretic alternative motivation for the augmented
birack structure.
}\end{remark}


\begin{proof} (of theorem \ref{thm:boundary})
We must check that $\partial_{n-1}\partial_n=0$. Our lemmas show that each term 
in the sum with $j<k$ is matched by an equal term with opposite sign with $j>k$.
We have
\begin{eqnarray*}
\partial_{n-1}(\partial_n(x_1,\dots, x_n))
& = & \partial_{n-1}\left(\sum_{k=0}^{n}(-1)^k\left(\partial'_k(\vec{x})-\partial''_k(\vec{x})\right)\right) \\
& = & \sum_{j=0}^{n-1}\left(\sum_{k=0}^{n}(-1)^{k+j}\left(\partial'_j\partial'_k(\vec{x})-\partial''_j\partial'_k(\vec{x})-\partial'_j\partial''_k(\vec{x})+\partial''_j\partial''_k(\vec{x})\right)\right) \\
& = & \sum_{j<k}(-1)^{k+j}\left(\partial'_j\partial'_k(\vec{x})-\partial''_j\partial'_k(\vec{x})-\partial'_j\partial''_k(\vec{x})+\partial''_j\partial''_k(\vec{x})\right) \\
& & + \sum_{j>k}(-1)^{k+j}\left(\partial'_j\partial'_k(\vec{x})-\partial''_j\partial'_k(\vec{x})-\partial'_j\partial''_k(\vec{x})+\partial''_j\partial''_k(\vec{x})\right) \\
& = & \sum_{j<k}(-1)^{k+j}\left(\partial'_j\partial'_k(\vec{x})-\partial''_j\partial'_k(\vec{x})-\partial'_j\partial''_k(\vec{x})+\partial''_j\partial''_k(\vec{x})\right) \\
& & + \sum_{j<k}(-1)^{k+j-1}\left(\partial'_j\partial'_k(\vec{x})-\partial''_j\partial'_k(\vec{x})-\partial'_j\partial''_k(\vec{x})+\partial''_j\partial''_k(\vec{x})\right) \\
& = & 0.
\end{eqnarray*}
\end{proof}

\begin{definition}\textup{
Let $(G,X)$ be an agumented birack of characteristic $N$.
Say that an element $\vec{v}$ of $C_n(X)$ is \textit{$N$-degenerate} if
$\vec{v}$ is a linear combination of elements of the form
\[\sum_{k=1}^N(x_1,\dots,x_{j-1},\pi^{k}(x_j),\pi^{k-1}(x_j),x_{j+2},\dots,x_n).\]
Denote the set of $N$-degenerate $n$-chains and $n$-cochains as $C^D_n(X)$
and $C_D^n(X)$ and the homology and cohomology groups, $H^{D}_n$ and $H_D^n$. 
}\end{definition}

\begin{theorem}
The sets of $N$-degenerate chains form a subcomplex of $(C_n,\partial)$.
\end{theorem}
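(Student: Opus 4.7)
The plan is to reduce to the boundary of a single generator of the degenerate subgroup and analyze it index-by-index. Fix
\[
D = \sum_{k=1}^N (x_1,\dots,x_{j-1},\pi^k(x_j),\pi^{k-1}(x_j),x_{j+2},\dots,x_n);
\]
since $C^D_n(X)$ is spanned by such elements, it suffices to show $\partial_n(D)\in C^D_{n-1}(X)$. I would then write $\partial_n(D)=\sum_{i=1}^n(-1)^i(\partial'_i(D)-\partial''_i(D))$ and split the indices $i$ into the \emph{interior} pair $\{j,j+1\}$ (which touch the degenerate pair) and the \emph{exterior} complement.

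Before treating the cases, I would extract two identities in $G$ that hold in any augmented birack: $\alpha_{\pi(x)}=\alpha_x$ and $\beta_{\pi(x)}=\beta_x$ for every $x\in X$. Both come from substituting $x\mapsto \pi(y)$ into the first and third relations of axiom (iii) and using axiom (i) to replace $\alpha_{\pi(y)}(y)$ by $\beta_y\pi(y)$; in each case the resulting common left factor is invertible and cancels. Iterating gives $\alpha_{\pi^k(x)}=\alpha_x$ and $\beta_{\pi^k(x)}=\beta_x$, which are the technical backbone of the argument.

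For exterior $i$ the term $\partial'_i(D)$ is manifestly a generating degenerate chain, because deleting the $i$th coordinate leaves the $(\pi^k(x_j),\pi^{k-1}(x_j))$ pair intact (only its position index shifts). For $\partial''_i(D)$ the pair is acted on by $\alpha_{x_i}$ (for $i<j$) or $\beta_{x_i}$ (for $i>j+1$), and I would argue that the sum over $k$ is again of the required form with $x_j$ replaced by its image under this action, yielding another generating degenerate chain. For the interior indices I would show that the contributions at $i=j$ and $i=j+1$ are equal, so that their opposite signs $(-1)^j,(-1)^{j+1}$ in $\partial_n$ force cancellation. On the $\partial'$ side this is a reindexing $k\mapsto k-1$ combined with $\pi^N=\mathrm{id}$. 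On the $\partial''$ side the derived identities collapse the outer-coordinate actions to $\alpha_{x_j}$ and $\beta_{x_j}$ in both expressions, and the surviving central entries, $\alpha_{\pi^k(x_j)}\pi^{k-1}(x_j)$ in $\partial''_j(D)$ and $\beta_{\pi^{k-1}(x_j)}\pi^k(x_j)$ in $\partial''_{j+1}(D)$, coincide by axiom (i) applied to $y=\pi^{k-1}(x_j)$.

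The main obstacle is the exterior $\partial''_i$ case: I must verify that applying an element of $G$ to a degenerate pair, then summing over $k$, still lands in the linear span of generating degenerate chains. This is where the compatibility between the $G$-action, the cyclic structure imposed by $\pi^N=\mathrm{id}$, and the augmented birack axioms is most delicate, and it is the step most likely to require care in the bookkeeping. Once this is settled, combining it with the interior cancellation gives $\partial_n(D)\in C^D_{n-1}(X)$, completing the proof.
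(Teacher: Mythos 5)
Your overall structure---reduce to a single generating degenerate chain $D$, split $\partial_n(D)$ into contributions from the interior indices $\{j,j+1\}$ and the exterior indices, and cancel the interior pair using axiom (i) together with $\pi^N=\mathrm{id}$---matches the paper's proof. Your derived identities $\alpha_{\pi(x)}=\alpha_x$ and $\beta_{\pi(x)}=\beta_x$ are correct and follow from exactly the axiom~(iii) substitutions you describe; moreover they are implicitly needed even for the interior cancellation, so that the entries in positions $i\ne j,j+1$ agree between the $\partial''_j(D)$ and $\partial''_{j+1}(D)$ terms, a point the paper leaves tacit.

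However, the exterior $\partial''_i$ step you flag as delicate is a genuine gap, and the identities you derived do not close it. For $i<j$ the middle pair in $\partial''_i(D)$ becomes $(\alpha_{x_i}\pi^k(x_j),\ \alpha_{x_i}\pi^{k-1}(x_j))$, and to recognize the sum over $k$ as a generating degenerate chain with $x_j$ replaced by $\alpha_{x_i}(x_j)$ you need $\alpha_{x_i}\pi^k=\pi^k\alpha_{x_i}$, i.e., that the bijection $\alpha_{x_i}$ commutes with $\pi$ in $G$ (and likewise $\beta_{x_i}\pi=\pi\beta_{x_i}$ for $i>j+1$). This is logically distinct from $\alpha_{\pi(x)}=\alpha_x$: the latter says the map $\alpha\colon X\to G$ is constant along $\pi$-orbits, not that its image commutes with $\pi$. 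The required commutation does hold---from axiom~(i) and your identity one gets $\pi(x)=\beta_x^{-1}\alpha_x(x)$, and the three relations of axiom~(iii) can then be rearranged to yield $\alpha_y\pi=\pi\alpha_y$ and $\beta_y\pi=\pi\beta_y$---but you must actually supply this argument (or adapt the self-distributivity computation from \cite{EN}, which is what the paper cites at exactly this step) before the exterior $\partial''$ terms can be declared $N$-degenerate.
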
 

\begin{proof}
We must show that $\vec{v}\in C^D_n(X)$ implies 
$\partial(\vec{v})\in C^D_{n-1}(X)$.  Using linearity it is enough to prove that
\[\partial\left(\sum_{k=1}^N(x_1,\dots,x_{j-1},\pi^{k}(x_j),\pi^{k-1}(x_j),x_{j+2},\dots,x_n)\right)\] 
is $N$-degenerate.  Let $\displaystyle \vec{u}=\sum_{k=1}^N(x_1,\dots,x_{j-1},\pi^{k}(x_j),\pi^{k-1}(x_j),x_{j+2},\dots,x_n)$, we have:

\begin{eqnarray*}
\partial (  \vec{u}) & = & 
\partial \left[
\sum_{k=1}^N 
(x_1,\dots,x_{j-1},\pi^{k}(x_j),\pi^{k-1}(x_j),x_{j+2},\dots,x_n)\right] \\ 
& = &\sum_{k=1}^N  \partial
(x_1,\dots,x_{j-1},\pi^{k}(x_j),\pi^{k-1}(x_j),x_{j+2},\dots,x_n) 
\\\\& = &  
\sum_{k=1}^N \left\{ \sum_{i=1}^{j-1} (-1)^{i} [ (x_1,\dots,\widehat{x_{i}},\dots, \pi^k(x_j),  \pi^{k-1}(x_j), x_{j+2}, \dots, x_n) \right.
\\ & & 
\left. - (\beta_{x_i}(x_1) ,\dots, \beta_{x_i}(x_{i-1}) ,\widehat{x_i}, \alpha_{x_i} (x_{i+1}),\dots,  \alpha_{x_i}(\pi^k(x_j)), 
\alpha_{x_i}(\pi^{k-1}(x_j)), \alpha_{x_i}(x_{j+2}), \dots, \alpha_{x_i}(x_n)) ] \right\} 
\\ \\& & +
\left\{\sum_{k=1}^N 
\{ 
(-1)^j [ (x_1,\dots, x_{j-1}, \pi^ {k-1}(x_j), x_{j+2}, \dots, x_n) 
\right.  
\\ & &
 - \left. 
(\beta_{\pi^k(x_j)} (x_1) ,\dots, \beta_{\pi^k(x_j)} (x_{j-1})  , \alpha_{\pi^k(x_j)}( \pi^{k-1}(x_j)), \alpha_{\pi^k(x_j)} (x_{i+2}), \dots, \alpha_{\pi^k(x_j)} (x_n))\right]  
\\ &  & +
(-1)^{j+1}\left[ (x_1,\dots, x_{j-1}, \pi^k(x_j), x_{j+2}, \dots, x_n)
\right. 
\\& &  - 
\left. \left.(\beta_{\pi^{k-1}(x_j)} (x_1) ,\dots, \beta_{\pi^{k-1}(x_j)}  (x_{j-1}),  \beta_{\pi^{k-1}(x_j)}  (x_{j}) ,  \alpha_{\pi^{k-1}(x_j)} (x_{j+2}), \dots,  \alpha_{\pi^{k-1}(x_j)} ( x_n)) \right]  \right\}
\\ \\& & +
\sum_{k=1}^N \left\{\sum_{i=j+2}^{n} (-1)^{i} 
\left[ (x_1,\dots, \pi^{k}(x_j), \pi^{k-1}(x_j), \dots, \widehat{x_i},\dots, x_n)  \right. \right.
\\&  &  \left.\left.
 - ( \beta_{x_i}(x_1) ,\dots,  \beta_{x_i}(x_{j-1}) ,  \beta_{x_i}(\pi^k(x_j)) ,  \beta_{x_i}(\pi^{k-1}(x_j)) , \dots, \widehat{x_i},  
  \alpha_{x_i}(x_{i+1}), \dots, \alpha_{x_i}(x_n)) \right]
\right\} \hfill \quad\quad\quad(1)
\end{eqnarray*}
where as usual $(x_1,\dots, \widehat{x_i},\dots, x_n)$ means $(x_1,\dots, x_{i-1},x_{i+1},\dots,x_n)$.
Now the rest of the proof is based on the following two facts: (1) $\pi^N=1$ and (2) $\alpha_{\pi^k(x)}(\pi^{k-1}(x))=\beta_{\pi^{k-1}(x)}(\pi^k(x))$ which is obtained by induction from axiom (i) in the definition \ref{def1} (of augmented birack).

 The following sum vanishes:
\begin{eqnarray*}
& &
\sum_{k=1}^N 
\left\{
 \left[ (x_1,\dots, x_{j-1}, \pi^ {k-1}(x_j), x_{j+2}, \dots, x_n) 
\right.\right. 
\\ 
& &
 - \left. \left.
(\beta_{\pi^k(x_j)} (x_1) ,\dots, \beta_{\pi^k(x_j)} (x_{j-1})  , \alpha_{\pi^k(x_j)}( \pi^{k-1}(x_j)), \alpha_{\pi^k(x_j)} (x_{i+2}), \dots, \alpha_{\pi^k(x_j)} (x_n))\right]  \right.
\\ 
& &
- \left[ (x_1,\dots, x_{j-1}, \pi^k(x_j), x_{j+2}, \dots, x_n)
\right. 
\\ & & 
 - 
\left. \left.(\beta_{\pi^{k-1}(x_j)} (x_1) ,\dots, \beta_{\pi^{k-1}(x_j)}  (x_{j-1}),  \beta_{\pi^{k-1}(x_j)}  (x_{j}) ,  \alpha_{\pi^{k-1}(x_j)} (x_{j+2}), \dots,  \alpha_{\pi^{k-1}(x_j)} ( x_n)) \right]  \right\}
\end{eqnarray*}
because $\alpha_{\pi^k(x)}(\pi^{k-1}(x))=\beta_{\pi^{k-1}(x)}(\pi^k(x))$ and $\pi^N=1$.  The rest of the sums can be written as combination of degenerate elements as in the proof of theorem 2 in \cite{EN} by the authors.

%
\end{proof}

\begin{definition}\textup{
The quotient groups $H_n^{NR}(X)=H_n(X)/H^D_n(X)$ and $H^n_{NR}(X)=H^n(X)/H_{D}^n(X)$
are the \textit{$N$-Reduced Birack Homology} and 
\textit{$N$-Reduced Birack Cohomology} groups.
}\end{definition}

\section{Augmented Birack Cocycle Invariants}\label{ABCI}

In this section we will use augmented birack cocycles to enhance the 
augmented birack counting invariant analogously to previous work.

Let $L_{\vec{w}}$ be an oriented framed link diagram with framing vector $\vec{w}$
and a labeling $f\in\mathcal{L}(L_{\vec{w}},(G,X))$ by an augmented birack $(G,X)$ 
of characteristic $N$. For a choice of $\phi\in H^2_{NR}$, we define an integer-valued 
signature of the labeling called a \textit{Boltzmann weight} by adding contributions from
each crossing as pictured below. Orienting the crossing so that both strands are 
oriented upward, each crossing contributes $\phi$ evaluated on the pair of labels
on the left side of the crossing with the understrand label listed first.
\[\includegraphics{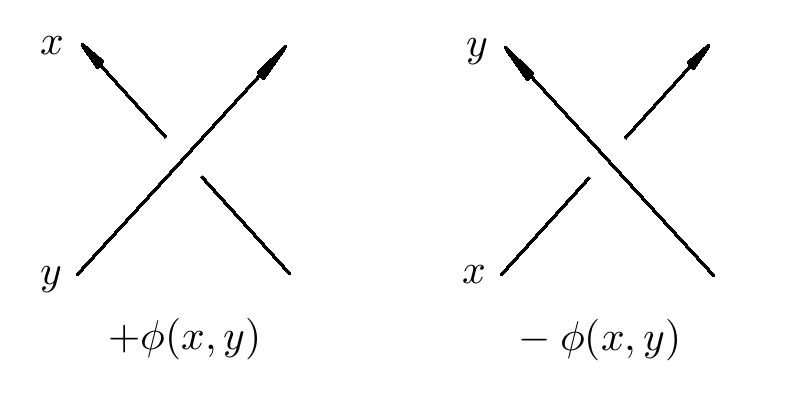}\]
Then as we can easily verify, the Boltzmann weight $BW(f)=\sum_{\mathrm{crossings}} \pm\phi(x,y)$ 
is unchanged by framed oriented
Reidemeister moves and $N$-phone cord moves. Starting with move III, note that 
$\phi\in H^2(x)$ implies that
\begin{eqnarray*}
(\delta^2\phi)(x,y,z) & = & \phi(\partial_2(x,y,z)) \\
& = & \phi((y,z)-(\alpha_x(y),\alpha_x(z))-(x,z)+(\beta_y(x),\alpha_y(z))+(x,y)-(\beta_z(x),\beta_z(y))) \\
& = & \phi(y,z)-\phi(\alpha_x(y),\alpha_x(z))-\phi(x,z)+\phi(\beta_y(x),\alpha_y(z))
+\phi(x,y)-\phi(\beta_z(x),\beta_z(y)) \\ 
& = & 0
\end{eqnarray*}
and in particular we have
\[\phi(y,z)+\phi(\beta_y(x),\alpha_y(z))+\phi(x,y)
 =  \phi(\alpha_x(y),\alpha_x(z))+\phi(x,z)+\phi(\beta_z(x),\beta_z(y)).\]
Then both sides of the Reidemeister III move contribute the same amount to the
Boltzmann weight:
\[\includegraphics{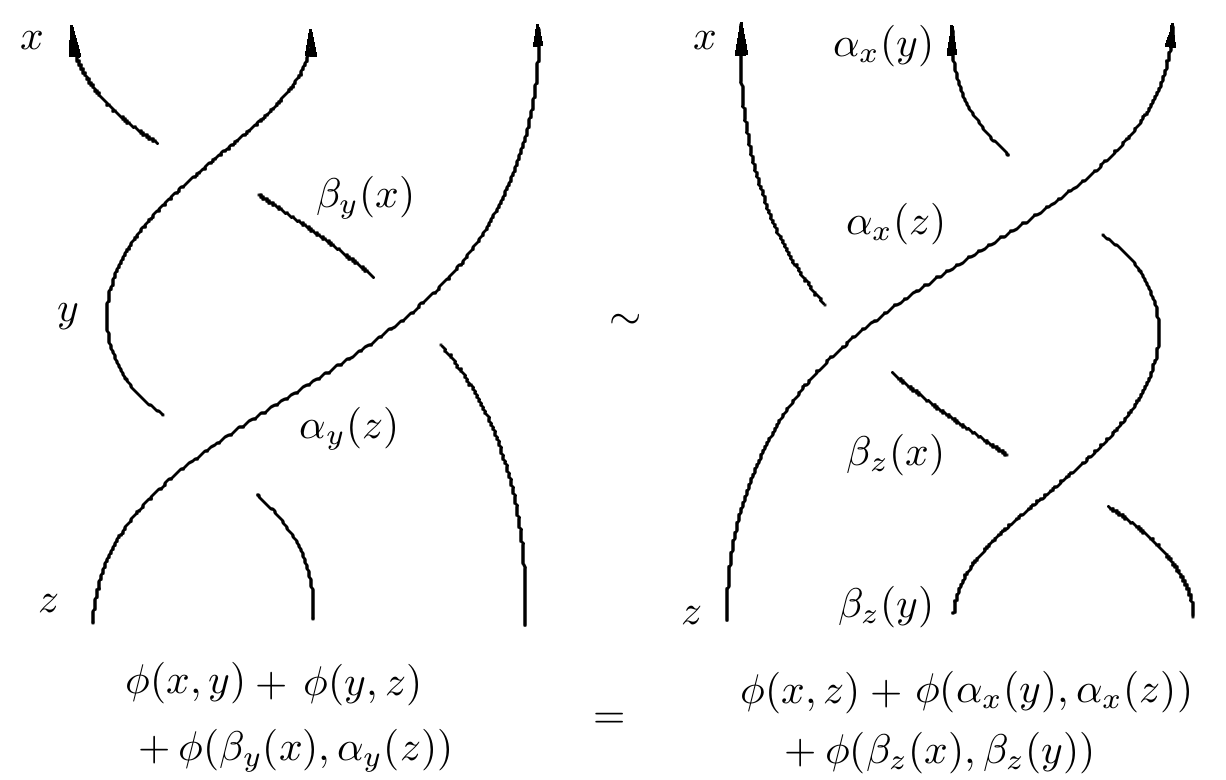}\]
Both sides of both type II moves contribute $0$ to the Bolztmann weight:
\[\includegraphics{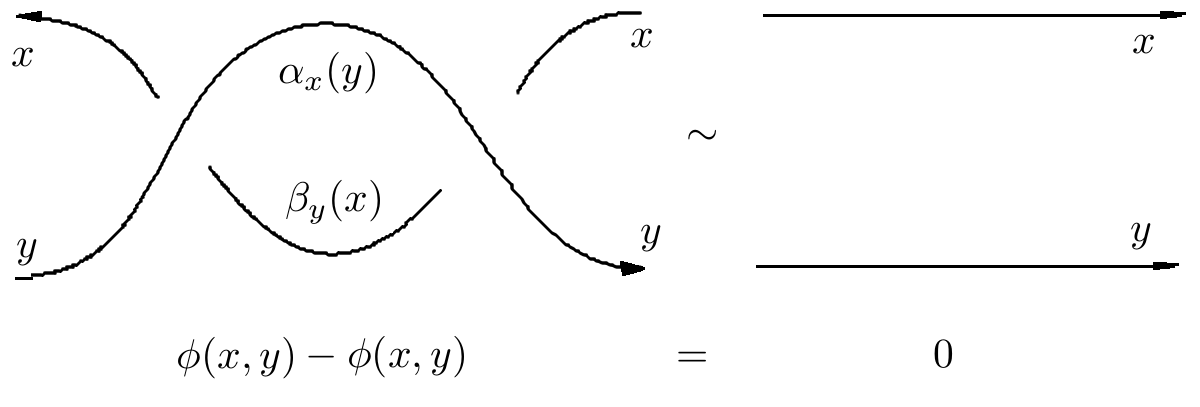}\]
\[\includegraphics{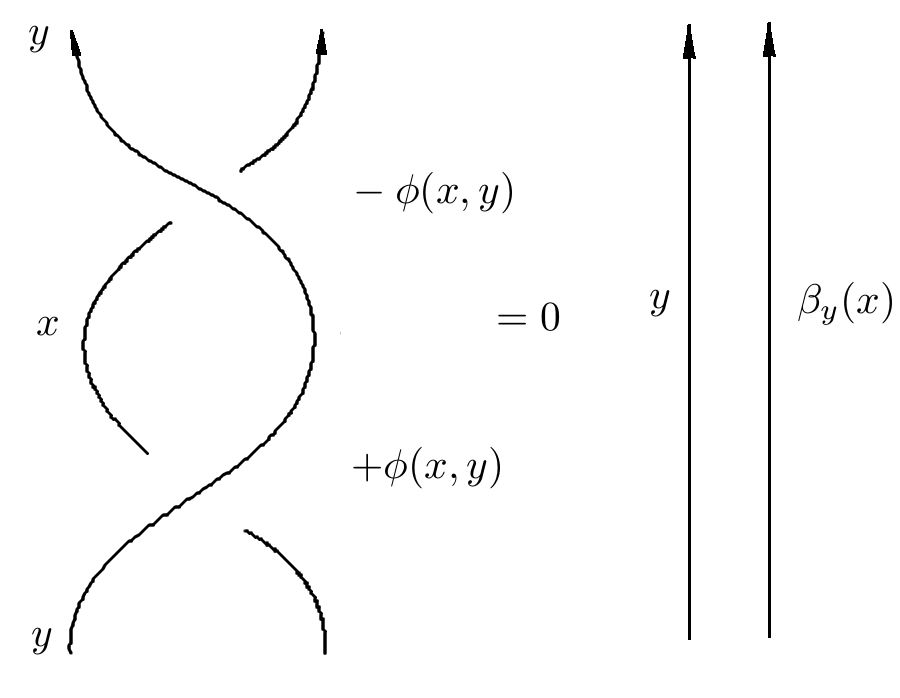}\]
Similarly, both sides of the framed type I moves contribute zero; here
we use the alternate form of the framed type I move for clarity, with 
$y=\bar{\alpha_x}\pi(x)=\bar{\beta_{\pi(x)}}(x)$:
\[\includegraphics{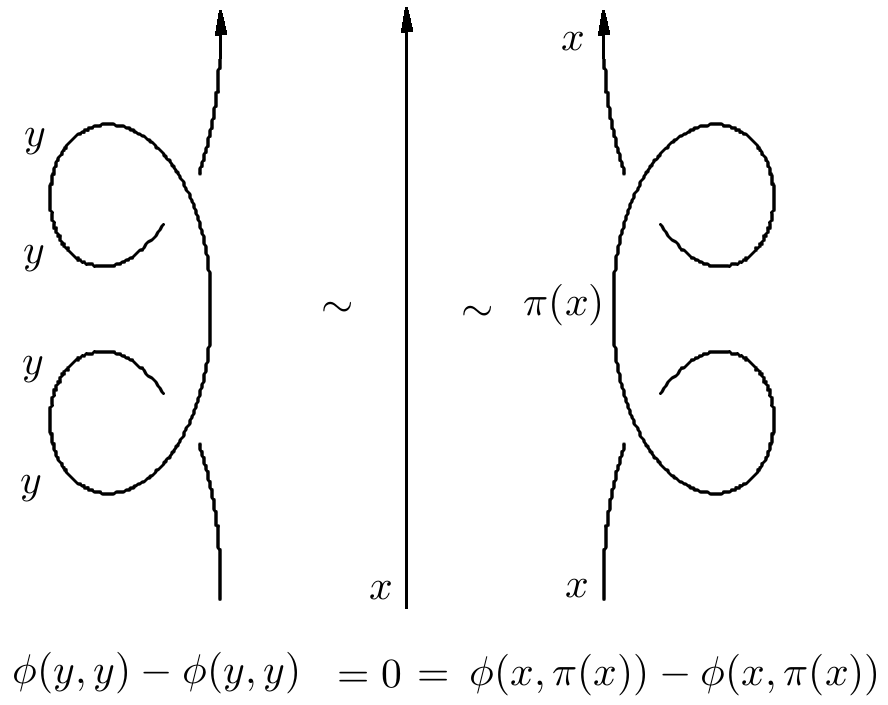}\] 
Finally, the $N$-phone cord move contributes a degenerate $N$-chain:
\[\includegraphics{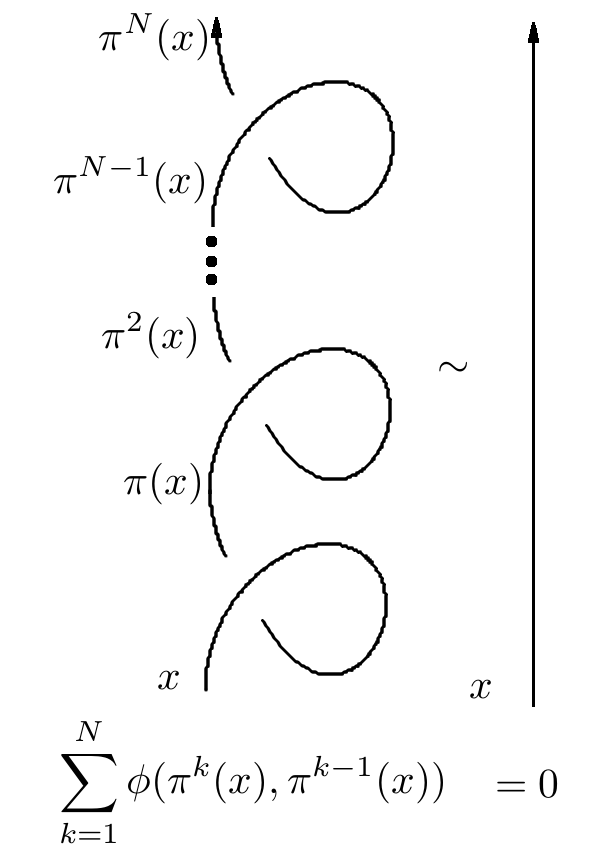}\] 

Putting it all together, we have our main result:
\begin{theorem}
Let $L$ be an oriented unframed link of $c$ components and $(G,X)$ a finite
augmented birack of characteristic $N$. For each $\phi\in H^2_{NR}(X)$, the multiset
$\Phi_{\phi}^M(L)$ and polynomial $\Phi_{\phi}(L)$ defined by
\[\Phi_{\phi}^M(L)=\{BW(f)\ |\ f\in \mathcal{L}(L_{\vec{w}},(G,X)), \vec{w}\in (\mathbb{Z}_N)^c\}\]
and
\[\Phi_{\phi}(L)=\sum_{\vec{w}\in (\mathbb{Z}_N)^c} \left(\sum_{ f\in \mathcal{L}(L_{\vec{w}},(G,X))} u^{BW(f)}\right)\]
are invariants of $L$ known as the \textit{augmented birack 2-cocycle 
invariants} of $L$.
\end{theorem}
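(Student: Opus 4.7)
My plan is to package the move-by-move computations already displayed in the discussion preceding the theorem into a single invariance argument, then pass from framed to unframed invariants by summation over one tile of framings.

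Fix a cocycle $\phi$ representing a class in $H^2_{NR}(X)$. For each framed oriented Reidemeister move and for the $N$-phone cord move, I invoke the bijection $\mathcal{L}(L,(G,X)) \cong \mathcal{L}(L',(G,X))$ from Theorem 1 and compare Boltzmann weights of corresponding labelings crossing by crossing. Each case has been verified above: Reidemeister III reduces to the expansion of the cocycle condition $(\delta^2\phi)(x,y,z) = 0$; direct and reverse type II moves cancel because the two new crossings carry opposite signs with matching labels; the framed type I move contributes zero on each side after using axiom (i) together with the kink identity $y = \bar{\alpha_x}\pi(x) = \bar{\beta_{\pi(x)}}(x)$ to rewrite the labels around the kink; and the $N$-phone cord move contributes $\sum_{k=1}^{N}\phi(\pi^{k}(x),\pi^{k-1}(x))$, which is $\phi$ evaluated on a generator of $C_2^D(X)$ and vanishes because $\phi$ represents a class in the $N$-reduced cohomology.

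It follows that for any fixed framing $\vec w$ the multiset $\{BW(f) : f \in \mathcal{L}(L_{\vec w},(G,X))\}$ is an invariant of $L_{\vec w}$ as a framed oriented link. Since $\pi^N = 1$, framings differing by $N$ in any component yield identical labeling sets and hence identical Boltzmann-weight multisets, so the framing lattice is effectively $(\mathbb{Z}_N)^c$. Summing these framed invariants over one tile aggregates them into invariants of the unframed link $L$, giving both $\Phi^M_\phi(L)$ and $\Phi_\phi(L) = \sum_{\vec w} \sum_{f} u^{BW(f)}$.

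The main obstacle I anticipate is the careful sign bookkeeping at positive versus negative crossings, together with verifying that the framed type I contribution really does vanish on the nose (rather than merely in cohomology); this requires a subtle interaction between axiom (i) and the kink-map identity above. Everything else is routine assembly of the per-move verifications already on display.
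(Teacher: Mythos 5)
Your proposal is correct and takes essentially the same route as the paper: the paper itself proves the theorem by displaying the per-move verifications (Reidemeister III via the cocycle condition, the two type II moves by opposite-sign cancellation, framed type I via the kink identity, and the $N$-phone cord move via $N$-degeneracy and membership of $\phi$ in $H^2_{NR}$) and then states the theorem with the phrase ``Putting it all together.'' Your passage from framed to unframed invariants by summing over one $(\mathbb{Z}_N)^c$-tile of framings is likewise exactly the paper's mechanism.
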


\begin{remark}\textup{
We note that if $\phi\in H^2(X)$ then the corresponding quantities, 
\[\Phi_{\phi}^M(L_{\vec{w}})=\{BW(f)\ |\ f\in \mathcal{L}(L_{\vec{w}},(G,X))\} \quad
\mathrm{and} \quad \Phi_{\phi}(L_{\vec{w}})=\sum_{ f\in \mathcal{L}(L_{\vec{w}},(G,X))} u^{BW(f)},\] 
are invariants of $L_{\vec{w}}$ as a framed link.}
\end{remark}

\begin{remark}\textup{
If $L$ is a virtual link, $\Phi_{\phi}^M(L)$ and $\Phi_{\phi}$ are invariants
of $L$ under virtual isotopy via the usual convention of ignoring
the virtual crossings. 
}\end{remark}

As in quandle homology, we have 
\begin{theorem}
Let $(G,X)$ be an augmented birack.
If $\phi\in H^2(X)$ is a coboundary, then for any $(G,X)$-labeling $f$ of a
framed link $L_{\vec{w}}$ the Boltzmann weight $BW(f)=0$.
\end{theorem}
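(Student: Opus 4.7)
The approach is the standard ``exact form integrates to zero'' argument in this combinatorial setting. Since $\phi$ is a coboundary I may write $\phi = \delta^1 \psi$ for some 1-cochain $\psi \colon X \to \mathbb{Z}$; unpacking $\delta^1 \psi(x,y) = \psi(\partial_2(x,y))$ together with the formula for $\partial_2$ computed above gives
\[
\phi(x,y) = \psi(x) - \psi(y) + \psi(\alpha_x(y)) - \psi(\beta_y(x)).
\]
Each of the four terms on the right is $\psi$ evaluated on the label of one of the four semiarcs incident to a crossing with left-side labels $(x,y)$, so substituting this expression into $BW(f) = \sum_c \epsilon_c \phi(x_c, y_c)$ exhibits $BW(f)$ as a signed sum of $\psi$-values of semiarc labels.

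The cleanest packaging is to introduce the state-sum 2-chain
\[
\Gamma_f = \sum_c \epsilon_c\, (x_c, y_c) \in C_2(X),
\]
where the sum is over the classical crossings of $L_{\vec{w}}$; then $BW(f) = \phi(\Gamma_f) = (\delta^1 \psi)(\Gamma_f) = \psi(\partial_2 \Gamma_f)$, so it is enough to prove the combinatorial identity $\partial_2 \Gamma_f = 0$ in $C_1(X)$. I would verify this by expanding $\partial_2(x_c, y_c) = (x_c) - (y_c) + (\alpha_{x_c}(y_c)) - (\beta_{y_c}(x_c))$ at each crossing, identifying the four terms with the labels of the four corners of $c$ (directly via the sideways map $S$ at positive crossings, and via $S^{-1}$, rewriting $\alpha,\beta$ in terms of $\bar\alpha,\bar\beta$ using axiom (ii), at negative crossings), and regrouping the resulting sum by semiarc. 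Every semiarc of the diagram is a corner of exactly two crossings (its two endpoints along the link orientation), and the signs coming from $\epsilon_c$ together with the alternating pattern in $\partial_2$ force each semiarc's label to appear with equal-and-opposite coefficients, yielding $\partial_2 \Gamma_f = 0$.

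The main obstacle is the sign bookkeeping at negative crossings, where the formal expressions $\alpha_{x_c}(y_c)$ and $\beta_{y_c}(x_c)$ appearing in $\partial_2(x_c, y_c)$ are not the literal corner labels (which involve $\bar\alpha,\bar\beta$) and must be rewritten using axiom (ii) before the cancellation becomes visible. Once the corners are correctly identified the cancellation reduces to a direct case-by-case check of the same flavor as the Reidemeister II and III invariance arguments given earlier in the paper, so no new ideas are required beyond careful accounting of the eight possible (over/under) $\times$ (input/output) $\times$ (crossing sign) configurations at each semiarc endpoint.
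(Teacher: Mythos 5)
Your argument is correct and follows essentially the same route as the paper's: write $\phi = \delta^1\psi$, replace each $\pm\phi(x_c,y_c)$ by the four signed $\psi$-values at the corners of the crossing, and observe that the two contributions at the ends of every semiarc cancel. Your packaging of this cancellation as $\partial_2\Gamma_f = 0$ for the state-sum $2$-chain $\Gamma_f = \sum_c \epsilon_c(x_c,y_c)$ is a slight reformulation of the paper's observation that each semiarc labeled $x$ contributes $+\psi(x)$ at its tail and $-\psi(x)$ at its head.
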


\begin{proof}
If $\phi\in H^2(X)$ is a coboundary, then there is a map $\psi\in H^1$ such that
$\psi=\delta^2\phi=(\phi\delta_2)$. Then for any $(x,y)$ we have
\[
\phi(x,y)  =  \psi(\delta_2(x,y)) 
 = \psi(y) -\psi(\alpha_{x}(y))-\psi(x)+\psi(\beta_y(x)) 
\]
and the Boltzmann weight can be pictured at a crossing as below.
\[\includegraphics{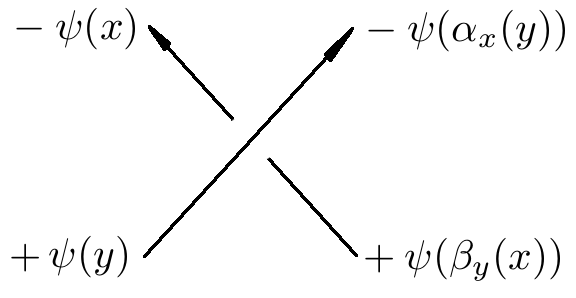}\]
In particular, every semiarc labeled $x$ contributes a $+\psi(x)$ at 
its tail and a $-\psi(x)$ at its head, so each semiarc contributes zero
to the Boltzmann weight.
\end{proof}

\begin{corollary}
Cohomologous cocycles define the same $\Phi_{\phi}(L)$ and $\Phi_{\phi}^M(L)$
invariants.
\end{corollary}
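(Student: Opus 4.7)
The plan is a direct reduction to the immediately preceding theorem. First, I would unpack the hypothesis: two $2$-cocycles $\phi, \phi' \in \ker \delta^2$ are cohomologous precisely when their difference lies in $\mathrm{Im}\,\delta^1$, so we may write $\phi' = \phi + \delta^1 \psi$ for some $1$-cochain $\psi \in C^1(X)$.

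Next, I would exploit the fact that the Boltzmann weight is a $\mathbb{Z}$-linear functional of its cocycle argument. Since $BW_\phi(f) = \sum_{\mathrm{crossings}} \pm \phi(x,y)$ is a finite signed sum of evaluations of $\phi$, for every labeling $f \in \mathcal{L}(L_{\vec{w}},(G,X))$ of every framed diagram $L_{\vec{w}}$ we obtain
\[ BW_{\phi'}(f) = BW_\phi(f) + BW_{\delta^1 \psi}(f). \]
By the preceding theorem applied to the coboundary $\delta^1 \psi$, the second term vanishes, giving $BW_{\phi'}(f) = BW_\phi(f)$ for every labeling of every framing.

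Finally, I would conclude as follows: since each labeling $f$ contributes exactly the same Boltzmann weight to both invariants, the multisets $\Phi^M_{\phi'}(L)$ and $\Phi^M_\phi(L)$ are identical as multisets indexed by $\bigcup_{\vec{w}\in(\mathbb{Z}_N)^c} \mathcal{L}(L_{\vec{w}},(G,X))$. Equality of multisets immediately yields equality of the generating polynomials $\Phi_{\phi'}(L) = \Phi_\phi(L)$ term by term. I do not anticipate any substantive obstacle; all the real work has been done in the previous theorem, and the corollary is essentially the observation that the Boltzmann weight assignment $\phi \mapsto BW_\phi(f)$ descends from $\ker \delta^2$ to $H^2(X)$.
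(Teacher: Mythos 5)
Your proof is correct and is precisely the argument the paper intends: the paper states the corollary without proof because it follows immediately from the preceding theorem via linearity of $BW$ in $\phi$, which is exactly the reduction you carry out. The only minor point worth noting is a notational slip in the paper's own statement of the preceding theorem's proof (it writes $\psi = \delta^2\phi$ where it means $\phi = \delta^1\psi$ with $\psi \in C^1$), which you have implicitly corrected.
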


\section{Examples}\label{E}

In this section we collect a few examples of the augmented birack cocycle 
invariants and their computation.

\begin{example}\textup{
Let $X=\{1,2,3,4\}$ be the set of four elements and $G=S_4$ the group
of permutations of $X$. The pair $(G,X)$ has augmented birack structures
including 
\[M_{(G,X)}=\left[\begin{array}{cccc}
2 & 3 & 3 & 2 \\
4 & 1 & 1 & 4 \\
1 & 4 & 4 & 1 \\
3 & 2 & 2 & 3 \\ \hline
3 & 2 & 2 & 3 \\
1 & 4 & 4 & 1 \\
4 & 1 & 1 & 4 \\
2 & 3 & 3 & 2 \\
\end{array}\right].\]
This augmented birack has kink map $\pi=(14)(23)$ and hence characteristic 
$N=2$. Thus, to find a complete tile of labelings of a link $L$, we'll need to 
consider diagrams of $L$ with framing vectors 
$\vec{w}\in(\mathbb{Z}_2)^2=\{(0,0),(0,1),(1,0),(1,1)\}$. Then for instance 
the Hopf link $L=L2a1$ has no labelings in framings $(0,0)$, $(1,0)$ 
and $(0,1)$ and sixteen labelings in framing $(1,1)$, for a counting invariant 
value of $\Phi^{\mathbb{Z}}_{(G,X)}(L2a1)=16+0+0+0=16$.
\[\includegraphics{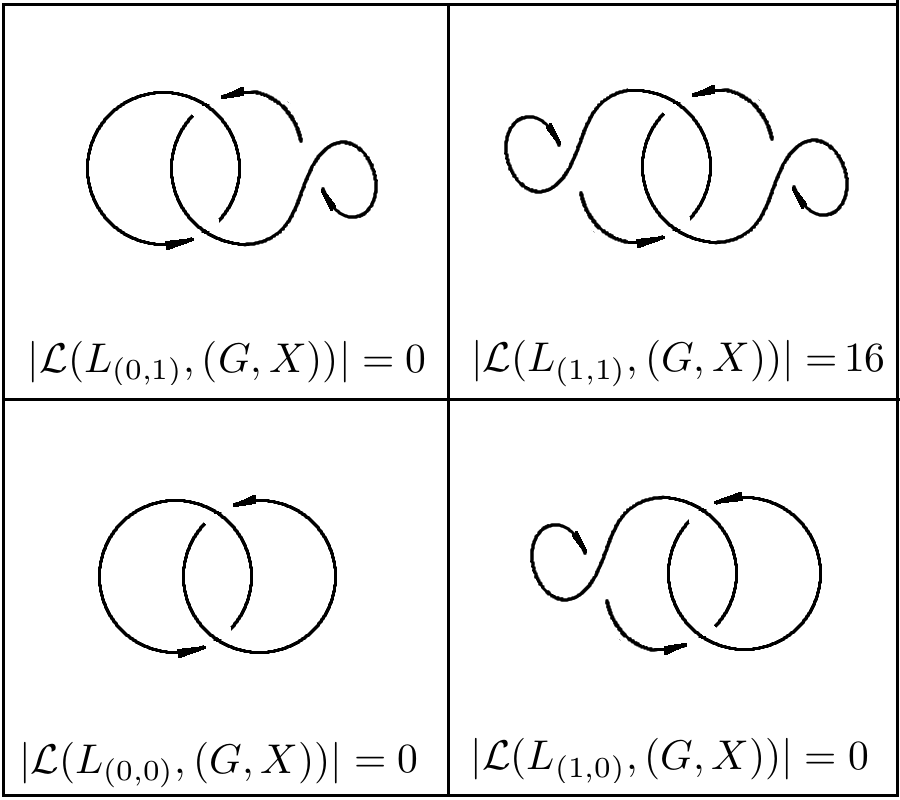}\]
Note that $C^2(X)$ has $\mathbb{Z}$--basis $\{\chi_{ij}\ |\ i,j=1,2,3,4,5\}$ where
\[\chi_{(i,j)}((i',j'))=\left\{\begin{array}{ll}
1 & (i,j)=(i',j') \\
0 & (i,j)\ne(i',j'). \\
\end{array}\right.\]
The function $\phi:X\times X\to \mathbb{Z}$ defined by 
\[\phi=\chi_{(2,1)}+\chi_{(2,4)}+\chi_{(3,1)}+\chi_{(3,4)}\]
is an $N$-reduced 2-cocycle in $H^2_{NR}(G,X)$. We then compute $\Phi_{\phi}(L)$
by finding the Boltzmann weight for each labeling. 
\[\includegraphics{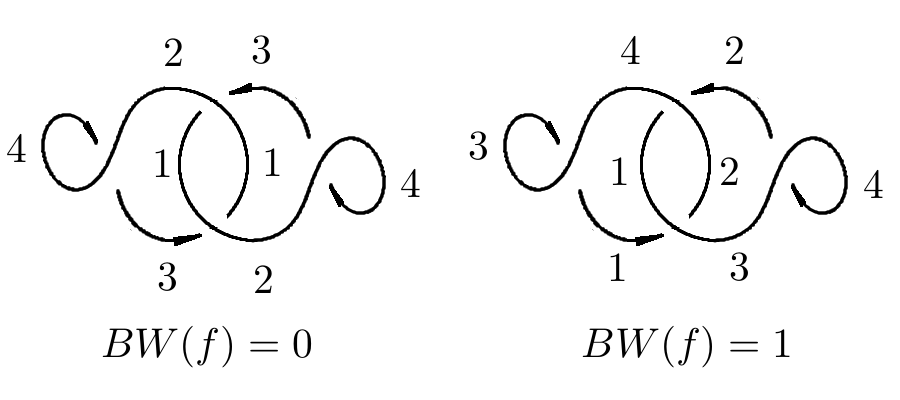}\]
In the labeling on the left, we have 
\[BW(f)=\phi(1,1)+\phi(1,1)+\phi(3,2)+\phi(3,2)=0+0+0+0=0\]
and in the labeling on the right we have
\[BW(f)=\phi(1,2)+\phi(2,1)+\phi(2,3)+\phi(1,4)=0+1+0+0=1.\]
Repeating for all 14 other labelings, we get $\Phi_{\phi}(L2a1)=8+8u$. 
Similarly the unlink $L0a1$ and $(4,2)$-torus link $L4a1$ have counting 
invariant value $\Phi^{\mathbb{Z}}_{(G,X)}(L0a1)=\Phi^{\mathbb{Z}}_{(G,X)}(L4a1)=
16$ with respect to $(G,X)$ but augmented birack cocycles invariant values
$\Phi_{\phi}(L0a1)=16$ and $\Phi_{\phi}(L4a1)=8+8u^2$ respectively. 
}\end{example}

\begin{example}\textup{
Now let $X$ be the augmented birack with matrix
\[\left[\begin{array}{ccccc}
4 & 1 & 1 & 4 & 4 \\
3 & 3 & 3 & 3 & 3 \\
2 & 2 & 2 & 2 & 2 \\
5 & 4 & 4 & 5 & 5 \\
1 & 5 & 5 & 1 & 1 \\ \hline
4 & 1 & 1 & 4 & 4 \\
2 & 3 & 3 & 2 & 2 \\
3 & 2 & 2 & 3 & 3 \\
5 & 4 & 4 & 5 & 5 \\
1 & 5 & 5 & 1 & 1 
\end{array}\right].\]
Augmented birack cocyle invariants are defined for virtual knots by the
usual convention of ignoring virtual crossings. Using our \texttt{Python}
code available at \texttt{http://www.esotericka.org}, we computed the values 
of $\Phi_{\phi}(K)$ for all virtual knots $K$ with up to four crossings as 
collected in the knot atlas \cite{KA} with the cocycle 
$\phi=\chi_{(1,4)}+\chi_{(1,5)}+\chi_{(5,4)}$; the results are collected in the 
table below.
\[
\begin{array}{r|l}
\Phi_{\phi}(K) & K \\ \hline
2+3u^{-2} & 4.1,4.3,4.7,4.25,4.37,4.43,4.48,4.53,4.73,4.81,4.82,4.87,4.89,4.100\\
2+3u^{-1} & 3.2,3.3,3.4,3.5,3.7,4.4,4.5,4.9,4.11,4.15,4.18,4.19,4.20,4.23,4.27,
4.29,4.30,4.33,4.34,4.35, \\
& 4.40,4.42,4.44,4.47,4.52,4.54,4.60,4.61,4.62,4.63,
4.65,4.69,4.74,4.78,4.79,4.80,4.83,4.85,\\ &  
4.86,4.91,4.93,4.94,4.96,4.97,4.102,4.106\\
5 & 3.1,3.6,4.2,4.6,4.8,4.10,4.12,4.13,4.14,4.16,4.17,4.22,4.24,4.28,4.31,4.32,
4.38,4.39,4.41, \\
& 4.45,4.46,4.49,4.50, 4.51,4.55,4.56,4.57, 4.58,4.59,4.64,4.66,4.67,4.68,4.70,4.71,4.72,4.75,\\
& 4.76,4.77,4.84,4.88,4.90,4.92,4.95,4.98,4.99,4.101, 4.103,4.104,4.105,4.107,4.108 \\
2+3u & 2.1,4.21,4.26,4.36\\
\end{array}\]
Our \texttt{Python} comptations, confirmed independently by Maple, show that for this augmented birack and cocycle,
$\Phi_{\phi}(K)=5$ for classical knots
$K$ with up to 8 crossings; it seems likely that that this is true for all 
classical knots $K$. For classical links $L$, however, $\Phi_{\phi}(L)$ is quite
nontrivial. Our values for $\Phi_{\phi}(L)$ for prime classical links with up to
$7$ crossings as listed in the knot atlas are below.
\[\begin{array}{r|l}
\Phi_{\phi}(L) & L \\ \hline
6u^{-2}+19 & L7n1 \\
12u^{-1}+41+30u+6u^2 & L7a7 \\
25 & L5a1,L7a1, L7a3, L7a4, L7n2 \\
125 & L6a4 \\
7+6u & L2a1, L7a5, L7a6 \\
19+6u^2 & L4a1, L6a1, L7a2 \\  
7+6u^3 & L6a2, L6a3 \\
29+36u+18u^2+6u^3 & L6a5, L6n1 \\ 
\end{array}\]
We also note that this example demonstrates that the invariant is sensitive to 
orientation, as for instance the Hopf link oriented to make both crossings 
positive has $\Phi_{\phi}$ value $7+6u$, while reversing one component yields
a  $\Phi_{\phi}$ value of $7+6u^{-1}$.
}\end{example}

\section{Questions}\label{Q}

In this section we collect a few open questions for future research.

In the case of quandle homology, many results are known involving the 
long exact sequence, the delayed Fibonacci sequence in the dimensions
of the homology groups for certain quandles, etc. Which of these results
extend to augmented birack homology?

In \cite{CN}, Yang-Baxter (co)homology was paired with $S$-(co)homology
to define an enhancement of the virtual biquandle counting invariant.
A future paper will consider the relationship between augmented birack
(co)homology and $S$-cohomology.

Can 3-cocycles in augmented birack homology be used to define invariants
of knotted surfaces in $\mathbb{R}^4$, analogously to the quandle case?

\noindent
\textsc{Department of Mathematics \\
Louisiana State University \\
Baton Rouge, LA 70803-4918}, \\\\
\textsc{Department of Mathematics, \\
University of South Florida, \\
4202 E Fowler Ave., \\
Tampa, FL 33620}\\\\
and\\\\
\textsc{Department of Mathematics, \\
Claremont McKenna College, \\
850 Columbia Ave., \\
Claremont, CA 91711}

\end{document}